\DeclareMathOperator{\dist}{dist}
\DeclareMathOperator{\conv}{conv}
\DeclareMathOperator{\cof}{cof}
\newcommand{\Hm}{\mathcal{H}} 
\newcommand{\N}{\mathbb{N}} 
\newcommand{\R}{\mathbb{R}} 
\newcommand{\C}{\mathbb{C}} 
\newcommand{\1}{\mathds{1}} 
\newcommand{\calA}{\mathcal{A}}
\newcommand{\id}{\mathrm{id}}
\newcommand{\Skew}{\mathrm{Skew}}
\newcommand{\Symm}{\mathrm{Symm}}
\newcommand{\GSBD}{\mathrm{GSBD}}
\newcommand{\GSBV}{\mathrm{GSBV}}
\newcommand{\SBD}{\mathrm{SBD}}
\newcommand{\SBV}{\mathrm{SBV}}
\newcommand{\BD}{\mathrm{BD}}
\newcommand{\aplim}{\mathrm{ap\,lim}}
\newtheorem{thm}{Theorem}[section]
\newtheorem{prop}[thm]{Proposition}
\newtheorem{lemma}[thm]{Lemma}
\newtheorem{rem}[thm]{Remark}
\newtheorem{defi}[thm]{Definition}
\newcommand{\JG}{\textcolor{black}}
\newcommand{\Jedit}{\textcolor{black}}
\title{The Euler-Bernoulli limit of thin brittle linearized elastic beams}
\date{}
\begin{document}

\author[J. Ginster]{Janusz Ginster}
\address{Janusz Ginster\\Institut f\"ur Mathematik \\Humboldt-Universit\"at zu Berlin\\Unter den Linden 6\\ 10099 Berlin, Germany}
\email{janusz.ginster@math.hu-berlin.de}

\author[P. Gladbach]{Peter Gladbach}
\address{Peter Gladbach\\ Insitut f\"ur Angewandte Mathematik \\ Universit\"at Bonn \\ Endenicher Allee 60 \\ 53115 Bonn, Germany}
\email{gladbach@iam.uni-bonn.de}

\subjclass[2010]{49J45,74R10,74K10,74B05}
\keywords{Dimension Reduction, Brittle Fracture, $\Gamma$-convergence, Euler-Bernoulli beam}

\begin{abstract}
We show that the linear brittle Griffith energy on a thin rectangle $\Gamma$-converges after rescaling to the linear one-dimensional brittle Euler-Bernoulli beam energy.

In contrast to the existing literature, we prove a corresponding sharp compactness result, namely a suitable weak convergence after subtraction of piecewise rigid motions with the number of jumps bounded by the energy.
\end{abstract}

\maketitle

\tableofcontents
\section{Introduction}

We consider a model of brittle linearly elastic Euler-Bernoulli beams of fixed length $L>0$ and variable thickness $h \searrow 0$. We define the undeformed beam as
\begin{equation}
\Omega_h := (0,L) \times (-h/2,h/2) \subset \R^2.
\end{equation}

We fix throughout the paper a positive definite Cauchy stress tensor $\C\in \R^{2\times 2 \times 2 \times 2}$ such that
\begin{equation}\label{eq: tensor properties}
\C_{ijkl} = \C_{jikl} = \C_{klij}\quad\text{ and }F:\C F \geq c|F+F^T|^2
\end{equation}
for some $c>0$.

This allows us to define the energy of a displacement field $w\in \GSBD^2(\Omega_h)$ as
\begin{equation}\label{def: Fh}
F_h(w) := \frac1{h^3}\int_{\Omega_h} \frac12 e w : \C e w \,dx + \frac\beta{h} \Hm^1 (J_w),
\end{equation}
\Jedit{where $\Hm^1$ denotes the $1$-dimensional Hausdorff measure and $J_w$ is the jump set of the function $w$, see Section \ref{sec: GSBD} (also for the definition of the space $\GSBD^2(\Omega_h)$ and its properties)}.

The elastic prefactor $\frac1{h^3}$ is chosen so that no stretching occurs and we recover the bending regime in the limit. The fracture prefactor $\frac{\beta}{h}$ denotes the material toughness, which has to scale as $\frac1h$ to recover the number of fracture points in the limit.

We note that by the symmetry of $\C$, for $w\in \GSBV(\Omega_1)$ we have $ew : \C ew = \nabla w : \C \nabla w$ almost everywhere, where $ew = (\nabla w + \nabla w ^T)/2$. For the more general definition of $ew$ and the space $\GSBD^2(\Omega_h)$, see Section \ref{sec: GSBD}.

Because we let $h\to 0$, we perform the usual change of variables: Let $y \in L^2(\Omega;\R^2)$ and $h>0$. Define $w \in L^2(\Omega_h)$ by $w(x_1,x_2) = y(x_1,x_2 / h)$ and correspondingly define the energy $E_h: L^0(\Omega_1) \to [0,\infty]$ by
\begin{equation}\label{eq: def energy}
E_h(y) = \begin{cases}
F_h(w) &\text{ if } w \in \GSBD^2(\Omega_h), \\
+ \infty &\text{ else,}
\end{cases}
\end{equation}
where $F_h$ is defined in \eqref{def: Fh}.
Formally, for regular functions $y$ (e.g. $y \in \SBV^2(\Omega_1;\R^2)$) by a change of variables it holds
\begin{equation}\label{eq: change of var}
E_h(y) = \frac{1}{h^2} \int_{\Omega_1} \frac12 (\partial_1 y, \frac1h \partial_2 y) : \C (\partial_1 y, \frac1h \partial_2 y) \,dx + \beta \int_{Jy} |(\nu_1,\frac1h\nu_2)|\,d\Hm^1,
\end{equation}
where $\nu\in S^1$ is the measure-theoretic normal to the jump set $\JG{J_y}$. 
In general by the definition it is not true that $F_h(y) < \infty$ implies that $\nabla y$ exists as a function in $L^2(\Omega_1;\R^{2\times 2})$. 
We will use Korn's inequality for functions with a small jump set (see \cite{CaChSc20}) to establish the relation \eqref{eq: change of var} on a large set.

We show that the sequence of energies $E_h$ $\Gamma$-converges to a limit energy $E_0$ which is only finite on the following set of admissible limit configurations
\begin{equation}
\calA := \{y\in \SBV^2(\Omega_1;\R^2)\,:\,D_2 y = 0, \partial_1 y_1 = 0, \partial_1 y_2\in \SBV(\Omega_1;\R)\}.
\end{equation}

In other words, $y$ is a function only of $x_1$, $y_1$ is piecewise constant on $(0,L)$, and $y_2$ is piecewise $W^{2,2}$ on $(0,L)$ with both $y_2$ and $\partial_1 y_2$ jumping finitely many times on $(0,L)$. The limit energy $E_0:L^1(\Omega_1;\R^2)\to [0,\infty]$ is given by (see Theorem \ref{thm: Gamma limit})
\begin{equation}\label{eq: limit}
E_0(y) := \begin{cases}
\frac1{24} \int_{\Omega_1} a|\partial_1\partial_1 y_2|^2\,dx + \beta \Hm^1(J_y \cup J_{\partial_1 y})&,\text{ if }y\in\calA\\
\infty&,\text{ otherwise}.
\end{cases}
\end{equation}

We note that $E_0$ is in fact a one-dimensional energy
\begin{equation}
E_0(y) = \int_0^L \frac{a}{\Jedit{24}} |y_2''|^2\,dx_1 + \beta \#(J_{y_2} \cup J_{y_2'} \cup J_{y_1})
\end{equation}
for $y(x_1,x_2) = (y_1(x_1),y_2(x_1))\in \calA$, with $y_1:(0,L)\to \R$ piecewise constant, which for $y_1=0$ coincides with the one-dimensional version of the Blake-Zisserman model for image denoising, \cite{BlakeZisserman87} (see also \cite{BlakeZisserman2,BlakeZisserman3,BlakeZisserman4} for its analysis).

The bending constant $a>0$ is defined as usual in Euler-Bernoulli beam theory as
\begin{equation}\label{eq: bending constant}
a := \inf_{b,c\in \R} \begin{pmatrix} 1 & b\\ 0&c \end{pmatrix} :\C \begin{pmatrix} 1 & b\\ 0&c \end{pmatrix}.
\end{equation}
The vector $(b,c)^T$ can be seen as an optimal shear response to a unit curvature. We note that unlike in Euler-Bernoulli beam theory, more complex models such as Ehrenfest-Timoshenko beam theory keep track of the additional shear variable in addition to the displacement $y$, leading to generally higher energy.

As is usual in the considered scaling regime in dimension reduction, the limit energy penalizes bending moments, which are not penalized in $E_h$. The emergence of a bending energy can be seen heuristically by taking
\[
y_h(x_1,x_2) := y(x_1,0) - x_2 h \nabla y_2(x_1,0) - \frac12 x_2^2 h^2 \partial_1\partial_1 y(x_1,0) \begin{pmatrix} b \\ c \end{pmatrix},
\]
where we need to subtract $x_2 h \nabla y_2(x_1,0)$ from $y$ so that the symmetric part of the matrix $(\partial_1 y_h, \frac1h \partial_2 y_h)$ converges to $0$. The precise calculation is found in Section \ref{sec: upper}. 

Similar $\Gamma$-convergence results have already been proven in the $n$-dimensional setting, see \cite{AlTa20,BaHe16}. However, here we show a stronger complementing compactness theorem, see also the discussion in Section \ref{sec: discussion}.
The complementing compactness result can be illustrated as follows.
Already without the possibility of fracture it is clear that sequences of functions with a bounded elastic energy are not precompact in a reasonable way as the elastic energy is invariant under the addition of rigid motions which form a non-compact set. 
Using Korn's inequality, in this setting it can be expected that one can identify a sequence of rigid motions $A_h x + b_h$, $A_h \in \Skew(2), b_h \in \R^2$ such that the difference of $w_h$ and the rigid motions is precompact after being rescaled to $\Omega_1$.
Additionally, fracture can occur and different rigid motions might be present on different parts of $\Omega_h$ which have been broken apart from one another. 
However, the form of the energy $E_h$ suggests that the only way to break apart larger parts of $\Omega_h$ is along essentially vertical cracks.
Hence, a reasonable compactness result needs to identify the different parts of $\Omega_h$ which have been broken apart from one another along vertical lines together with the corresponding dominant rigid motions, and
additionally detect the asymptotically vanishing part of $\Omega_h$ that is disconnected from the rest of $\Omega_h$ along non-vertical lines. 
In fact, we show that for an energy-bounded sequence $y_h$ there are $x_2$-independent, piecewise-constant functions $A_h$ and $b_h$  and asymptotically vanishing sets $\omega_h$ such that the sequence $\1_{\Omega_h \setminus \omega_h} (y_h - A_h (x_1,hx_2)^T - b_h)$ is precompact in $L^2(\Omega_1;\R^2)$.
Moreover, the functions $A_h$ and $b_h$ are constructed carefully enough so that the modified sequence does not have asymptotically more jump than $y_h$ which is important for meaningful asymptotic lower bounds, see Theorem \ref{thm: compactness}. A key tool in this analysis will be a Korn's inequality for $\GSBD^2(\Omega)$, see \cite{CaChSc20}.\\

Next, we present a brief overview over existing results in the literature.

\section{Elasticity, beams, and fracture}

\subsection{Geometric and linearized elasticity}

We provide in this section a brief overview over the relevant theories. First we start with unfractured homogeneous hyperelastic materials. Here a stress-free reference configuration $\Omega \subset \R^d$ undergoes a deformation $u:\Omega \to \R^d$. The geometric hyperelastic energy is then given by
\[
\int_\Omega W(\nabla u)\,dx,
\]
where $W:\R^{d\times d} \to [0,\infty)$ denotes the $C^2(\R^{d\times d})$ hyperelastic energy density. We make the physical assumptions that $W(\id) = 0$, i.e. $u(x) = x$ has the lowest possible energy, and $W(RA) = W(A)$ for $R\in SO(d)$, i.e. rotations have no effect on the energy. The most-studied energy densities are those with quadratic growth at $SO(d)$ and at $\infty$, where $\dist^2(A,SO(d)) \lesssim W(A) \lesssim \dist^2(A,SO(d))$. A central result in the theory of hyperelastic materials is the geometric rigidity result by Friesecke, James, M\"uller \cite{FJM02}, which states that for open connected Lipschitz domains $\Omega \subset \R^d$, there exists a constant $C(\Omega)>0$ such that
\begin{equation}\label{eq: rigidity}
\min_{R\in SO(d)} \int_{\Omega} |\nabla u - R|^2\,dx \leq C(\Omega) \int_\Omega \dist^2(\nabla u, SO(d))\,dx.
\end{equation}

In particular, we have that whenever $\int_\Omega W(\nabla u_k)\,dx\to 0$, up to subsequences and fixed rotations $R_k\in SO(d)$ and shifts $b_k\in \R^d$, we have $R_k^T (u_k(x) - b_k) \rightharpoonup x$ weakly in $H^1(\Omega;\R^d)$. For deformations with small hyperelastic energy, we may thus write $R_k^T(u_k(x) - b_k) = x + w_k(x)$, with $w_k(x)$ converging weakly to zero in $H^1(\Omega;\R^d)$. A Taylor expansion of the energy yields
\begin{equation}
\int_{\Omega} W(\nabla u_k)\,dx \approx \frac12 \int_{\Omega} \nabla w_k(x):\C \nabla w_k(x)\,dx,
\end{equation}
where $\C = D^2W(\id)\in \R^{d\times d\times d \times d}$. The quadratic growth conditions on $W$ and Schwarz's theorem then guarantee \eqref{eq: tensor properties}.
\Jedit{For a rigorous derivation via $\Gamma$-convergence, see \cite{DMPe02}.}

The dynamics of the resulting quadratic form dealing with infinitesimal displacements $|w| \ll 1$ are commonly referred to as linearized elasticity, and form an important part of the physics and engineering literature, see e.g. \cite{gurtin1973linear}. In particular, they are often times simpler to deal with than the geometrically nonlinear version.

For example, applying \eqref{eq: rigidity} to small deformations yields Korn's inequality
\begin{equation}
\min_{A\in \Skew(d)}\int_\Omega |\nabla u - A|^2 \,dx \leq C(\Omega) \int_\Omega |\nabla u + \nabla u^T|^2 \,dx,
\end{equation}
which can be proved using elementary methods and was in fact proved by Korn in \cite{korn1909einige}.

\subsection{Thin elastic structures}

In contrast to full bodies, lower dimensional structures have potentially lots of isometric embeddings into $\R^d$. A famous example is the Nash-Kuiper theorem \cite{nash1954c1}, which states that for every Riemannian $m$-manifold $M$ and every smooth $1$-Lipschitz map $f:M\to \R^d$ with $d> m$, there is an isometric $C^1$ immersion of $M$ into $\R^d$ that is arbitrarily close in $L^\infty(M)$ to $f$.

Compare that to open sets $\Omega \subset \R^d$, where every isometric $C^1$ deformation $w:\Omega \to \R^d$ must be a rigid motion by \eqref{eq: rigidity}.

Thin structures are slightly thickened versions of submanifolds. The simplest nontrivial example is the Euler-Bernoulli beam $\Omega_h := (0,L) \times (-h/2,h/2) \subset \R^2$. In the geometrically nonlinear setting, a deformation carries low hyperelastic energy if the midsection $(0,L) \times \{0\}$ is isometrically embedded, i.e. $|\partial_1 u(x_1,0)| = 1$. In that case,
\begin{equation}\label{eq: nonlinear bending}
\int_{\Omega_h} W(\nabla u)\,dx \approx h^3\int_0^L \frac{a}{24}|\partial_1\partial_1 u(x_1,0)|^2\,dx_1, \text{ if }|\partial_1 u(x_1,0)| = 1,
\end{equation}
where $a>0$ is defined by \eqref{eq: bending constant}, and $\C = D^2W(\id)$.

In contrast, starting with the linearized elastic energy,
we find that
\begin{equation}
\int_{\Omega_h} \frac12 ew:\C ew \,dx \approx h^3 \int_0^L \frac{a}{24}|\partial_1 \partial_1 w_2(x_1,0)|^2\,dx_1, \text{ if } \partial_1 w_1(x_1,0) = 0
\end{equation}
which is the linearized version of the bending energy \eqref{eq: nonlinear bending}, c.f.~\cite{Ciarlet}. The resulting one-dimensional energy is named the Euler-Bernoulli energy after its originators. See e.g.~\cite{gurtin1973linear, Antman2005} for further reading.

We note that in the scaling regime $\int_{\Omega_h} W(\nabla u)\,dx \approx h$, stretching is possible and dominates the energy over bending. The theory is generally called string theory, see e.g. \cite{Antman2005}. For its two-dimensional analogue, the so-called membrane theory, see, for example, \cite{LDR95,LDR96,Haf08,Haf06}.

Generalizing from thin structures in the plane to thin structures in three-dimensional space, we differentiate between beams or rods of the type $R_h := (0,L) \times hS \subset \R^3$, with $S\subset \R^2$ open, bounded, connected, and plates $P_h:= (0,L)\times (0,L) \times (-h/2,h/2)$. Both, linear and nonlinear variational models exist for both, see e.g. \cite{MoMu04} for beams and \cite{FJM02, FJM06} for plates. We note also that shells, which are curved analogues of plates, have been similarly studied, see e.g. \cite{friesecke2003derivation}.

\subsection{Griffith's model of fracture}

Fracture is one of multiple failure modes in elastic structures. Fracture occurs along codimension-one hypersurfaces called cracks, where the deformation is discontinuous. We differentiate between cohesive fracture, where the energy depends on the magnitude of the discontinuity, and brittle fracture, which we discuss in this article, where the total energy depends only on the surface measure of the crack. 

For an open reference configuration $\Omega\subset \R^d$ and a displacement field $w:\Omega \to\R^d$ which is $C^1$ outside a closed rectifiable hypersurface $\Gamma\subset \Omega$, we define the Griffith brittle fracture energy (see \cite{FrMa98,Gr21}) as
\begin{equation}
E(w) := \inf\left\{\int_{\Omega \setminus \Gamma} \frac12 ew:\C ew\,dx + \beta \Hm^{d-1}(\Gamma)\,:\,\Gamma \subset \Omega\textnormal{ closed} \text{ s.t. } w\in C^1(\Omega\setminus \Gamma) \right\}.
\end{equation}

Here $\beta>0$ is the material toughness, i.e. the surface tension of \JG{the} crack surface. Expectedly, the space of piecewise $C^1$ deformations generally does not contain the minimizers of $E$, which led to the characterization of the energy space for $E$ in \cite{dal2013generalised}, the space of generalized functions of bounded deformation $\GSBD^2(\Omega)$, whose definition and key properties we recount in Section \ref{sec: GSBD}.

The study of fracture in thin materials has seen advancement in recent years. 
In the nonlinear setting in \cite{FoBr01} the authors study the derivation of a membrane theory in which stretching is dominant, see also \cite{BoFoLeMa}. Recently, Schmidt showed in \cite{Sc17} that the nonlinear version of $E_h$ $\Gamma$-converges to
\begin{equation}
\int_0^L \frac{a}{24} |y''(x_1)|^2\, dx_1 + \beta \#(J_y \cup J_{\partial_1 y}),\text{ if }y\in \SBV((0,L);\R^2), |y'(x_1)| = 1 \text{ a.e.,}
\end{equation}
which is the nonlinear analogue to the limit energy $E_0$. 
In \cite{BaHe16} and \cite{AlTa20} \JG{the authors} study the asymptotics of an $n$-dimensional analogue of the energy $E_h$, see also \cite{Ba,AlBeMiPe21,BaBaBoHeMa14} for the antiplane setting. Using a slightly different rescaling of the function $y_h$, c.f.~\cite{Ciarlet}, the authors obtain the limiting energy
\[
\int_{\Omega_1} Q(\nabla y) \, dx + \beta \Hm^{d-1}(J_y),
\]
where $Q$ is a quadratic form and $y$ is of the form $y_i(x_1,\dots,x_d) = \bar{y}_i(x_1,\dots,x_d) - x_d \partial_i y_d(x_1,\dots,x_d)$ for $i=1,\dots, d-1$ and $y_d$ does not depend on $x_d$. 
Although very similar to the result presented here, we note that the used techniques are rather different. In order to identify the specific form of the limiting $y$ in \cite{BaHe16} the authors study the distributional symmetric gradient of $y$ together with convolution techniques, in \cite{AlTa20} the authors use a delicate approximation argument in $\GSBD^2$. 
In contrast our proofs are based on rigidity arguments which are much closer to the techniques used in \cite{Sc17}, see also \cite{FJM02,MoMu04}.
This allows to obtain more control on the rescaled gradient \Jedit{$\nabla_h y_h = (\partial_1 y_h, \frac1h \partial_2 y_h, \frac1h \partial_3 y_h)$}.
In the presented setting this enables us to obtain an improved compactness statement and a short proof for the identification of the limiting configurations. Moreover, in other problems the additional control of $\nabla_h y_h$ is crucial. For example in the derivation of a rod theory the information about torsion is stored in the limit of $\nabla_h y_h$ and cannot be seen in the limiting $y$, see \cite{GiGl21}.

We note that our result deals with the slightly simpler linear energy but uses different methods, which can be generalized to the linear theory in higher dimensions.

\section{Notation}

Throughout the paper $C>0$ is a generic constant that may change from line to line.
Moreover, we use standard notation $x = (x_1,\dots,x_d)$ for vectors in $\R^d$. In particular, we will identify vectors with its transpose wherever it simplifies notation.
At several points, the components of vector-valued functions with a subscript, e.g. $w_h:\R^2\to\R^2$, are denoted $(w_h)_1, (w_h)_2$.
We say that two vectors $v,w\in \R^d$ are parallel, $v\|w$, if they are linearly dependent.
The space of symmetric and skew-symmetric $\R^{d\times d}$ matrices will be denoted by $\Symm(d)$ and $\Skew(d)$, respectively.
We use standard notation for the Lebesgue measure $\mathcal{L}^d$ and the $s$-dimensional Hausdorff measure $\mathcal{H}^s$. Moreover, for a Lebesgue-measurable set $B \subseteq \R^d$ we write $|B|$ for $\mathcal{L}^d(B)$.
Moreover, we use standard notation for Lebesgue and Sobolev spaces $L^p$ and $W^{1,p}$.
Lastly, for a set of finite perimeter $E \subseteq \R^d$ we write $\partial^* E$ for its reduced boundary, cf.\cite{AmbrosioFuscoPallara}.

\section{Main results}

We now state our main results.
We start with the compactness result.

\begin{thm}\label{thm: compactness}
Let $E_h$ be defined as in \eqref{eq: def energy} and $(y_h)_h \subseteq L^2(\Omega_1;\R^2)$ such that it holds $\sup_{h > 0} E_h(y_h) < \infty$. Then there is a subsequence (not relabeled), a function $y\in \calA$, a sequence of sets $\sigma_h \subset \Omega_1$ of finite perimeter with measure theoretic normal $\nu \in S^1$, and sequences of piecewise constant functions $\overline{A}_h:(0,L) \to \Skew(2)$, $\overline{b}_h:(0,L) \to \R^2$ such that
\begin{enumerate}[label=(\roman*)]

\item 
\begin{equation}
\lim_{h\to 0} \int_{\Omega_1\setminus \sigma_h} |y_h(x) - \overline{A}_h(x_1)(x_1,hx_2)^T - \overline{b}_h(x_1) - y(x)|^2\,dx  = 0 .
\end{equation} \label{item: convergence y_h}

\item  $|\sigma_h| \to 0$ and
\begin{equation}
\sup_{h>0} \int_{\partial^*\sigma_h} |(\nu_1,\frac1h \nu_2)|\,d\Hm^1 < \infty.
\end{equation} \label{item: bounds baromegah} 

\item  We have
\begin{equation}
\Hm^1(J_y \cup J_{\partial_1 y}) \leq  \left\lfloor\liminf_{h\to 0} \int_{J_{y_h}} |(\nu_1,\frac1h \nu_2)| \,d\Hm^1 \right\rfloor
\end{equation}
and, for every fixed $\bar{h}>0$,
\begin{equation}
 \#(J_{\overline{A}_{\bar h}} \cup J_{\overline{b}_{\bar h}}) \leq  \left\lfloor\liminf_{h\to 0} \int_{J_{y_h}} |(\nu_1,\frac1h \nu_2)| \,d\Hm^1 \right\rfloor.
\end{equation} \label{item: inequality jumpsets}
Here $\lfloor \cdot \rfloor:[0,\infty)\to \N$ is the floor function $x\mapsto \max\{n\in\N\,:\,n\leq x\}$.
\end{enumerate}
\end{thm}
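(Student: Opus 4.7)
The plan is to extract from the bounded surface energy a finite partition of $(0,L)$ into columns separated by "full" vertical cracks, apply a Korn-type inequality in $\GSBD^2$ on each column, and then identify the structure of the limit. Write $M_h := \int_{J_{y_h}}|(\nu_1,\tfrac{1}{h}\nu_2)|\,d\Hm^1$, which is bounded by $E_h(y_h)/\beta$, and pass to a subsequence so that $M_h \to M := \liminf_{h\to 0} M_h$. Slicing gives $\int_{J_{y_h}} |\nu_2|\,d\Hm^1 \le h M_h \to 0$, while $\int_{J_{y_h}} |\nu_1|\,d\Hm^1 \le M_h$ stays bounded. Any union of jumps that actually separates $\Omega_1$ into two connected components has vertical extent $\ge 1$ and thus contributes at least $1$ to $M_h$, so $y_h$ admits at most $\lfloor M_h\rfloor$ separating cracks; order them to obtain $0 = t_0^h < \dots < t_{N_h}^h = L$ with $N_h - 1 \le \lfloor M_h\rfloor$. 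The non-separating jumps sit in small connected components of $\Omega_1 \setminus J_{y_h}$, which will be absorbed into $\overline{\omega}_h$.

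On each rectangle $R_i^h := (t_{i-1}^h, t_i^h) \times (-1/2, 1/2)$, I would invoke the $\GSBD^2$ Korn inequality from \cite{CaChSc20} to obtain $A_h^i \in \Skew(2)$, $b_h^i \in \R^2$, and an exceptional subset $\omega_h^i \subset R_i^h$ of vanishing area and controlled rescaled perimeter, such that
\begin{equation}
\int_{R_i^h \setminus \omega_h^i} \bigl|y_h(x) - A_h^i(x_1, hx_2)^T - b_h^i\bigr|^2\,dx \;\le\; C \int_{R_i^h} |e_h(y_h)|^2\,dx \;\le\; C\, h^2\, E_h(y_h),
\end{equation}
where $e_h(y_h)$ denotes the symmetric part of $(\partial_1 y_h, \tfrac{1}{h}\partial_2 y_h)$ and the last step uses \eqref{eq: tensor properties}. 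Setting $\overline{A}_h, \overline{b}_h$ to be piecewise constant with values $A_h^i, b_h^i$ on $(t_{i-1}^h, t_i^h)$ and $\overline{\omega}_h := \bigcup_i \omega_h^i$ together with the non-separating small components from the first step, item (ii) follows because the rescaled perimeter of $\overline{\omega}_h$ is controlled by $M_h$ plus the output of $\GSBD^2$ Korn. Since an affine map of the form $A(x_1,hx_2)^T + b$ with $A\in \Skew(2)$ has vanishing $e_h$, the residual $z_h := \1_{\Omega_1 \setminus \overline{\omega}_h}(y_h - \overline{A}_h(x_1)(x_1, hx_2)^T - \overline{b}_h(x_1))$ is bounded in $L^2(\Omega_1;\R^2)$ and satisfies $\|e_h(z_h)\|_{L^2} \lesssim h$, yielding (i) via a standard compactness argument.

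To establish $y \in \calA$, the vanishing of $e_h(z_h)$ in the limit forces $\partial_1 y_1 = 0$ from the $(1,1)$-entry and $\partial_2 y_2 = 0$ from the $(2,2)$-entry, so $y$ depends only on $x_1$ and $y_1$ is constant on each limit column (hence piecewise constant). The $(1,2)$-entry bound $\tfrac12(\partial_1 z_{h,2} + \tfrac1h \partial_2 z_{h,1}) \to 0$ in $L^2$, combined with an ansatz $z_{h,1}(x_1,x_2) \approx \tilde u(x_1) - h x_2\, \partial_1 y_2(x_1)$, yields an $L^2$-bound on $\partial_1\partial_1 y_2$ on each piece, so $\partial_1 y_2 \in \SBV(\Omega_1)$ and thus $y\in\calA$. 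For (iii), by construction $\#(J_{\overline{A}_h} \cup J_{\overline{b}_h}) \le N_h - 1 \le \lfloor M_h \rfloor$, and a further subsequence ensures $\lfloor M_h \rfloor \le \lfloor M \rfloor$ for every (remaining) $h$; since the jumps of $y$ and $\partial_1 y$ can only arise at limit points of $\{t_i^h\}$, the same bound passes to the limit. The main obstacle will be the careful bookkeeping for $\overline{\omega}_h$ to simultaneously achieve vanishing area and uniformly bounded rescaled perimeter, together with verifying that the $\GSBD^2$ Korn constant on each $R_i^h$ stays uniformly controlled as $h\to 0$; the quantitative sharp form of the inequality in \cite{CaChSc20} is precisely what makes this possible.
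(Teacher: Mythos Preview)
There are two genuine gaps in your approach, and both are illustrated by the example in Remark~\ref{rem: incomplete}.

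\textbf{The Korn constant on long columns blows up.} You apply the $\GSBD^2$ Korn--Poincar\'e inequality of \cite{CaChSc20} on each rectangle $R_i^h=(t_{i-1}^h,t_i^h)\times(-1/2,1/2)$ with a constant $C$ independent of $h$. After the change of variables back to $\Omega_h$, this is Korn--Poincar\'e on a rectangle of dimensions $(t_i^h-t_{i-1}^h)\times h$, whose aspect ratio is of order $1/h$. The constant in \cite{CaChSc20} is \emph{not} uniform in that aspect ratio: already for the smooth field $w(x_1,x_2)=(-x_2 x_1^2/2,\,x_1^3/6)$ on $(0,1)\times(-h/2,h/2)$ one has $\|ew\|_{L^2}^2\sim h^3$ while $\inf_{A,b}\|w-Ax-b\|_{L^2}^2\sim h$, so the Korn--Poincar\'e constant is at least of order $h^{-2}$. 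This is precisely why the paper does \emph{not} work on full columns but instead covers $\Omega_h$ by overlapping rectangles $Q_z$ of fixed aspect ratio $2$ (Definition~\ref{def: good rectangles}), applies Korn on each with a universal constant, and then chains the resulting rigid motions together via Remark~\ref{rem: neighboring rectangles} and the bridge estimate of Proposition~\ref{prop: bridge}.

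\textbf{The separating/non-separating dichotomy fails.} You argue that only cracks which topologically disconnect $\Omega_1$ force a jump of $\overline{A}_h,\overline{b}_h$, and that such cracks contribute at least $1$ to $M_h$. Remark~\ref{rem: incomplete} gives a displacement with $M_h=1-h^3<1$ (so by your count $N_h-1=0$ and $\overline{A}_h,\overline{b}_h$ would be constant), yet the two halves of the domain carry rigid motions differing by $O(1/h)$; subtracting any single rigid motion leaves $\|z_h\|_{L^2(\Omega_1)}^2\gtrsim h^{-2}\to\infty$. Absorbing the tiny triangle into $\overline{\omega}_h$ does not help, since the two halves remain connected through a strip of width $h^3$. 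The paper's remedy is again Proposition~\ref{prop: bridge}: between two good rectangles with total jump at most $(1-\eta)h$ one can find three line segments avoiding the jump set, which forces the two rigid motions to agree up to the elastic energy. Connected components of bad rectangles carrying jump $\ge(1-\eta)h$ are declared genuine breaks; there are at most $\lfloor\liminf M_h\rfloor$ of them, which is where the floor of the \emph{liminf} (rather than of $M_h$) in (iii) comes from.
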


\begin{rem}\label{rem: incomplete}
In other words, $\overline{A}_h$ and $\overline{b}_h$ only jump where the limit jump density of $y_h$ is at least one. We emphasize that one may not replace (iii) by the better estimate
\begin{equation}
\#(J_{\overline{A}_h} \cup J_{\overline{b}_h}) \leq  \left\lfloor \int_{J_{y_h}} |(\nu_1,\frac1h \nu_2)| \,d\Hm^1 \right\rfloor.
\end{equation}

To see this, consider the sequence of triangles in $\overline{\Omega_h}$ with vertices $t_h := (L/2, h/2 - h^4)$, $l_h := (L/2 - h^4, h/2)$, $r_h := (L/2 + h^4, h/2)$, and define
\begin{equation}
w_h(x) := 
\begin{cases}
0 &x_1 < L/2, x \notin \conv(t_h,r_h,l_h)\\
\frac1h(x-a_h)^\perp&x_1 > L/2, x\notin \conv(r_h,l_h,t_h)\\
\frac1h(v^\perp\otimes v)(x-a_h) &x\in \conv(r_h,l_h,t_h),
\end{cases}
\end{equation}
where $v = (e_1+e_2)/\sqrt{2} = (r_h - t_h)/|r_h - t_h|$. See Figure \ref{fig: incomplete} for a sketch of the corresponding deformation.
 
This displacement field jumps on the line segment $\gamma_h := \{L/2\}\times (-h/2, h/2 - h^4)$ and has elastic energy
\begin{equation}
\int_{\Omega_h \setminus \gamma_h} |ew_h|^2\,dx = \int_{\conv(t_h,r_h,l_h)} |\nabla w_h|^2\,dx = h^8/h^2 = h^6 \ll h^3.
\end{equation}

So even though $\Hm^1(J_{w_h}) = h-h^4$, there are no constant $\overline{A}_h, \overline{b}_h$ such that $w_h - \overline{A}_h x - \overline{b}_h$ is bounded on most of $\Omega_h$. To achieve convergence in measure, we have to allow $\overline{A}_h,\overline{b}_h$ to jump at $L/2$. 
\end{rem}

\begin{figure}
\includegraphics{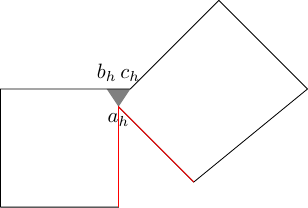}
\caption{Sketch of the deformation $x+w_h(x)$ in Remark \ref{rem: incomplete}. The length of the jump is slightly less than the height. Elastic stress, while high, is contained to the small triangle $\conv(a_h,b_h,c_h)$.}\label{fig: incomplete}
\end{figure}

\begin{rem}
\JG{Even for $\bar{A}_h=0, \bar{b}_h=0$} the compactness result only guarantees convergence in measure, but not in $L^1(\Omega_1;\R^2)$. As a result, we have that the minimizers of $E_h + F$ converge in measure to minimizers of $E_0 + F$ whenever $F$ is continuous under convergence in measure. Nontrivial linear functionals $F$ are of course not continuous under convergence in measure. Consider for example $F(y) := \int_{\Omega_1} y_2 \,dx$, and
\begin{equation}
y_h(x) := \begin{cases}
-\frac{1}{h^5} e_2 &, \text{ if }x\in B((L/2,0),h^2)\\
0&, \text{ otherwise.}
\end{cases}
\end{equation} 

Then $E_h(y_h) \to 0$, $y_h \to 0$ in measure but not in $L^1$, and $F(y_h) \to -\infty$.
\end{rem}

Next, we state the main $\Gamma$-convergence result.

\begin{thm} \label{thm: Gamma limit}
The $\Gamma$-limit of $E_h$ as defined in \eqref{eq: def energy} with respect to the convergence in Theorem \ref{thm: compactness}  is $E_0$. More precisely, we have
\begin{enumerate}[label=(\roman*)]
\item [(i)] For any $y\in \calA$ there is a sequence $y_h\in \SBV^2(\Omega_1;\R^2)$ such that $y_h \to y$ in $L^\infty(\Omega_1;\R^2)$ and $\lim_{h\to 0} E_h(y_h) = E_0(y)$. \label{item: upper bound gamma}

\item [(ii)] Let $y\in \calA$  and $y_h: \Omega_1 \to \R^2$, $\omega_h\subset \Omega_1$, and $A_h:(0,L)\to \Skew(2)$, $b_h:(0,L)\to \R^2$ two piecewise constant functions, such that the conditions of Theorem \ref{thm: compactness} hold. Then $E_0(y) \leq \liminf_{h\to 0} E_h(y_h)$. \label{item: lower bound gamma}
\end{enumerate}
\end{thm}

\begin{rem}
\JG{Note that the conditions for the lower bound (ii) in Theorem \ref{thm: Gamma limit} are satisfied if $y_h \to y$ in $L^2$, c.f.~Theorem \ref{thm: compactness} below. }
\end{rem}

\begin{rem}
For an isotropic material it holds $\mathbb{C} F : F = 2\mu |F_{sym}|^2 + \lambda \operatorname{tr}(F)^2$ where the Lam\'e coefficients satisfy $\mu>0$ and $2\mu +  \lambda > 0$. 
A straightforward computation then shows for the bending constant from \eqref{eq: bending constant} that $a = 2 \mu + 2 \frac{\mu\lambda}{2\mu+\lambda}$.
\end{rem}

\section{The space $\GSBD^2$ and Korn's inequality}\label{sec: GSBD}

We use the space of Generalized functions of Special Bounded Deformation with integrability $2$, written $\GSBD^2(\Omega)$, as the effective domain of the energies $F_h$, \JG{c.f.~\cite{dal2013generalised}}. This space is the natural topological function space for the brittle Griffith fracture model. It is analogous to the $\GSBV^p$ spaces that are widely used in image segmentation, \JG{see \cite{AmbrosioFuscoPallara} for the definition and the properties of the spaces $\GSBV^p$}. In order to recall the definition of the space $\GSBD^p$, we first recall the definition of the jump set and the approximate symmetric gradient.

\begin{defi}
\begin{itemize}
\item [(i)] Let $\Omega \subset \R^d$ be open, $x\in \Omega$, and $v:\Omega \to \R^N$ be measurable. The \emph{approximate limit} of $v$ at $x$, if it exists, is defined as the measurable function $\aplim_x v :\R^d \to \R^N$ such that
\[
v\left(\frac{\cdot - x}{r} \right) \stackrel{r \to 0}{\rightarrow} \aplim v_x\text{ in measure on }B(0,1).
\]

Note that the function $\aplim_x v$ is positively $0$-homogeneous, i.e. $\aplim_x v(ty) = \aplim_x v(y)$ for all $y\in \R^d$, $t>0$.

\item [(ii)] The \emph{jump set} $J_v$ of a measurable function $v:\Omega \to \R^N$ is the set of all points $x\in \Omega$ where $\aplim_x u$ exists and is of the form
\[
\aplim_x v (y) = \begin{cases}
a&\text{, if }y\cdot \nu > 0\\
b&\text{, otherwise}
\end{cases}
\]
for some $\nu\in S^{d-1}$, $a,b\in \R^N$.
\item[(iii)] A function $v: \Omega \to \R^d$ is said to have an approximate symmetric gradient $ev(x) \in \Symm(d)$ at $x \in \Omega$ if it holds
\[
\aplim_x \frac{ \left( u(y) - u(x) - ev(x) (y-x) \right) \cdot (y-x) }{|x-y|^2} = 0.
\]
\end{itemize}
\end{defi}

Next we recall the definitions of the spaces $\BD(\Omega)$, $\SBD^p(\Omega)$ and $\GSBD(\Omega)$.

\begin{defi}
\begin{itemize}
\item [(i)]
Let $\Omega \subset \R^d$ be open. A vector field $v\in L^1(\Omega;\R^d)$ is said to be of bounded deformation, $\BD(\Omega)$, if
\begin{equation}
\sup \left\{\int_\Omega v \cdot (\nabla \cdot \phi) \,dx\,:,\phi \in C_c^1(\Omega; \Symm(d)) , \|\phi\|_\infty \leq 1 \right\} < \infty.
\end{equation}

\item [(ii)]
The vector field is said to be of special bounded deformation with integrability $p\in (1,\infty)$, $\SBD^p(\Omega)$, if there is a tensor field $ev\in L^p(\Omega;\Symm(d))$ such that
\begin{equation}
\int_\Omega -v \cdot (\nabla \cdot \phi)\,dx = \int_\Omega ev:\Phi\,dx + \int_{J_v} [v] \cdot \phi \nu \,d\Hm^{d-1}
\end{equation}
for all $\phi\in C_c^1(\Omega; \Symm(d))$ and $\Hm^{d-1}(J_v) < \infty$ , where $J_v$ is the jump set of $v$.

\item [(iii)]
A measurable vector field $v:\Omega \to \R^d$ is said to be a function of generalized special bounded deformation, $\GSBD(\Omega)$, if there exists a bounded Radon measure measure $\lambda \in \mathcal{M}_{+}(\Omega)$ such that for every $\xi\in \R^d$ and for $\Hm^{d-1}$-almost every $y\in \xi^\perp \subset \R^d$ the function $f_{\xi,y}:\Omega_{\xi,y} \to \R$, where $\Omega_{\xi,y} := \{t\in \R\,;\,y+t\xi\in \Omega\}$, defined as
\[
f_{\xi,y}(t) :=  u(y+t\xi) \cdot \xi,
\]
is in $\SBV_{loc}(\Omega_{\xi,y})$ and it holds for every Borel set $B \subset \Omega$
\[
\int_{\xi^{\perp}} |D f_{\xi,y}|(B_{\xi,y} \cap \Omega_{\xi,y}) + \mathcal{H}^0(B_{\xi,y} \cap J_{f_{\xi,y)}} \cap \{|[ f_{\xi,y}] \geq 1\}) \, d\mathcal{H}^{n-1}(y) \leq \lambda(B),
\]
where $B_{\xi,y} = \{ t \in \R: y+t\xi \in B\}$.

\end{itemize}
\end{defi}

We recall from \cite{dal2013generalised} that for $v \in \GSBD(\Omega)$ it can be shown  that the approximate symmetric gradient $ev \in L^1(\Omega;\R^{d \times d})$ exists $\mathcal{L}^d$-a.e. and the jump set $J_v$ is a countably $\mathcal{H}^{d-1}$-rectifiable set with measure theoretic normal $\nu_v$.
Moreover, it holds for $\xi \in S^{d-1}$ and $\mathcal{H}^{d-1}$-a.e.~$y \in \xi^{\perp}$
\[
Df_{\xi,y} = ev(y+t \xi) \xi \cdot \xi \, dt +  \sum_{t\,:\,y+t\xi\in J_v} [f_{y,\xi}]\delta_t.
\]
Eventually, we define the set $\GSBD^p(\Omega)$.
\begin{defi}
Let $p \in (1,\infty)$. We say that $v \in \GSBD^p(\Omega)$ if $v \in \GSBD(\Omega)$, $ev \in L^p(\Omega)$ and $\mathcal{H}^{d-1}(J_v) < \infty$.
\end{defi}
For fine properties of the functions ins $\GSBD^p$ we refer to \cite{dal2013generalised}.

We now state a strong version of Korn's inequality for functions in $\GSBD^2(\Omega)$ in any dimension, which is found in \cite{CaChSc20}, for an earlier two-dimensional version see also \cite{FriedrichSchmidt_KornIn2d} or \cite{CoChFr16}.

\begin{prop}\label{prop: Korn}
\JG{Let $\Omega \subset \R^d$ be open, bounded, connected with Lipschitz boundary. Then there is a constant $C(\Omega)>0$ such that for all $w\in \GSBD^2(\Omega)$ there is a function $\bar w \in W^{1,2}(\Omega;\R^d)$ and a set set of finite perimeter $\omega \subseteq \Omega$ such that $w = \bar w$ on $\Omega \setminus \omega$.
\begin{equation}
|\omega| + \Hm^{d-1}(\partial \omega) \leq C(\Omega) \Hm^{d-1}(J_w)
\end{equation}
and 
\begin{equation}
\int_{\Omega} |e \bar w|^2 \, dx \leq C(\Omega) \int_{\Omega} |ew|^2 \, dx.
\end{equation}
Moreover, there exists a matrix $A\in \Skew(d)$ and a vector $b\in \R^d$ such that
\begin{equation}
\int_{\Omega \setminus \omega} |\nabla w - A|^2 + |w - Ax - b|^2  \,dx \leq C(\Omega) \int_\Omega |ew|^2\,dx,
\end{equation}}
\end{prop}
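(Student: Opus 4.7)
The plan is to first construct a Sobolev companion $w \in W^{1,2}(\Omega;\R^d)$ that agrees with $v$ outside a small exceptional set $\omega$, then apply the classical Korn inequality to $w$ to extract the global skew matrix $A$ and the translation $b$. The final estimate for $v$ will follow because the approximate gradients of $v$ and $w$ must coincide almost everywhere on $\Omega \setminus \omega$.

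For the construction of $w$ and $\omega$, I would invoke the main theorem of \cite{CaChSc20}. The underlying strategy is a covering argument: one covers $\Omega$ with a Whitney-type grid of cubes of side length comparable to an appropriate power of $\Hm^{d-1}(J_v)$, and distinguishes between \emph{good} cubes, where the local jump measure $\Hm^{d-1}(J_v \cap Q)$ is small compared to $\diam(Q)^{d-1}$, and \emph{bad} cubes. On each good cube a local Korn--Poincar\'e inequality in $\GSBD^2$ of Chambolle--Conti--Francfort type produces an affine rigid motion $r_Q(x) = A_Q x + b_Q$ with
\[
\int_Q |v - r_Q|^2\,dx \leq C\,\diam(Q)^2 \int_Q |ev|^2\,dx.
\]
One then builds $w$ by gluing the $r_Q$ via a partition of unity subordinate to a slightly enlarged grid, and defines $\omega$ as the union of the bad cubes together with the surrounding transition layers. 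Tracking constants yields both $|\omega| + \Hm^{d-1}(\partial \omega) \leq C(\Omega)\,\Hm^{d-1}(J_v)$ and $\int_\Omega |ew|^2\,dx \leq C(\Omega) \int_\Omega |ev|^2\,dx$.

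With $w$ in hand, the remaining arguments are classical. I would apply the standard Sobolev Korn inequality on the Lipschitz domain $\Omega$ to $w \in W^{1,2}(\Omega;\R^d)$ to obtain $A \in \Skew(d)$ and $b \in \R^d$ with
\[
\int_\Omega |\nabla w - A|^2 + |w - Ax - b|^2\,dx \leq C(\Omega) \int_\Omega |ew|^2\,dx,
\]
and then chain this with the energy bound from the previous step. Since $v = w$ pointwise on $\Omega \setminus \omega$, the approximate gradients agree there as well, so restricting the above inequality to $\Omega \setminus \omega$ delivers the final estimate in the statement.

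The main obstacle is clearly the first step: fabricating a $W^{1,2}$ companion to an arbitrary $\GSBD^2$ function with both a perimeter-type bound on the removed set and no inflation of the symmetrized gradient. This is delicate because $\GSBD^2$ functions need not even be locally integrable, and the jump set can be arbitrarily irregular, so a direct Whitney-style extension across $J_v$ is unavailable. The construction of \cite{CaChSc20} circumvents this by working intrinsically with local rigid approximations on cubes and carefully controlling how the bad cubes propagate into $\omega$ and its boundary.
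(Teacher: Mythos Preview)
The paper does not give its own proof of this proposition: it is stated as a known result and attributed to \cite{CaChSc20} (with two-dimensional precursors in \cite{FriedrichSchmidt_KornIn2d} and \cite{CoChFr16}). Your proposal correctly identifies this source and gives a reasonable high-level sketch of the strategy behind that result---build a Sobolev companion via local rigid approximations on good cubes, absorb the bad cubes into $\omega$, then apply the classical Korn inequality to $w$. Since the paper treats the proposition as a black box, there is nothing further to compare; your outline is consistent with how the cited result is proved.
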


We note that the estimate is useless if $\Hm^{d-1}(J_w)$ is too large with respect to $\Omega$, as then we can simply take $\omega = \Omega$.

We shall use this result to define good rectangles in $\Omega_h$, noting that we never use the extension $\bar w$, only the bounds on the bad set $\omega$. For the rest of the article (excluding the appendix) we work only in $d=2$.

\begin{defi} \label{def: good rectangles}
Let $h,\delta > 0$, and $w \in \GSBD^2(\Omega_h)$. We consider all the rectangles $Q_z := (z- h, z + h) \times (-h/2,h/2)$ with $z=h,2h,\ldots, (\lfloor L/h \rfloor - 1) h$. We call a rectangle $Q_z$ $\delta$-good if
\begin{equation}
\Hm^1(J_w \cap Q_z) \leq \delta h,
\end{equation}
and bad otherwise.
For a good rectangle we denote by $\omega_z \subseteq Q_z$ the exceptional set from Proposition \ref{prop: Korn}.
\end{defi}

\begin{rem}\label{rem: neighboring rectangles}
\Jedit{Again, let $h,\delta > 0$ and $w \in \GSBD^2(\Omega_h)$. }We note that if $\delta\leq \delta_0$ for some universal constant $\delta_0$, then we have on a good rectangle $Q_z$ by Proposition \ref{prop: Korn} that $|Q_z\setminus \omega_z| > 7|Q_z|/8$ and there is a unique pair $A_z\in \Skew(2), b_z\in \R^2$ defined as
\begin{equation}
(A_z,b_z) := \arg\min_{A,b} \int_{Q_z \setminus \omega_z} \frac1{h^2}|w(x) - Ax  -b|^2 + |\nabla w(x) - A|^2\,dx,
\end{equation}
where minimization runs over $\Skew(2) \times \R^2$. Proposition \ref{prop: Korn} yields that
\begin{equation}
\int_{Q_z \setminus \omega_z} \frac1{h^2}|w(x) - A_z x -b_z|^2 + |\nabla w(x) - A_z|^2\,dx \leq C \int_{Q_z} |ew|^2\,dx.
\end{equation}

Then we see that for two neighboring $\delta$-good rectangles $Q_z, Q_{z+h}$ we have
\begin{equation}\label{eq: good neighbors} |A_z - A_{z+h}|^2 +  |b_z - b_{z+h}|^2 \leq C h^{-2} \int_{Q_z \cup Q_{z+h}} |e w|^2,
\end{equation}
for some universal constant $C$.
\end{rem} 

We now show a stronger version of \eqref{eq: good neighbors} for two good rectangles that are separated by a sequence of bad rectangles as long as there is not enough jump to separate the two rectangles completely:

\begin{prop}\label{prop: bridge}
Let $\eta\in (0,1)$. Then there is a constant $\delta(\eta)>0$ such that for all $N\in \N$ there is a constant $C(\eta,N)>0$ such that the following holds:

Let $h>0$, \Jedit{$w_h \in \GSBD^p(\Omega_h)$}, $Q_z$ and $Q_{z'}$ be two $\delta(\eta)$-good rectangles with $|z-z'|\leq Nh$, and
\begin{equation}
\Hm^1(J_{w_h} \cap \conv(Q_z \cup Q_{z'})) \leq (1-\eta) h.
\end{equation}

Then
\begin{equation}\label{eq: good bridge}
|A_z - A_{z'}|^2 +   |b_z - b_{z'}|^2 \leq C(\eta, N) h^{-2} \int_{\conv(Q_z \cup Q_{z'})} |e w_h|^2\,dx
\end{equation}
\Jedit{Here, the matrices $A_z,A_{z'} \in \Skew(2)$ and vectors $b_z,b_{z'} \in \R^2$ are the matrices and vectors given by Proposition \ref{prop: Korn} on the squares $Q_z$ and $Q_{z'}$, respectively. }
\end{prop}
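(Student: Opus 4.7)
The plan is to apply Proposition~\ref{prop: Korn} to the rectangle $R := \conv(Q_z \cup Q_{z'})$ and compare the resulting ``global'' affine approximation with the local ones $(A_z,b_z)$ and $(A_{z'},b_{z'})$ from Remark~\ref{rem: neighboring rectangles}. After rescaling $R$ to unit thickness (its aspect ratio is bounded by $N+2$), Proposition~\ref{prop: Korn} applies with a constant $C_1(N)$ depending only on $N$, yielding $A\in\Skew(2)$, $b\in\R^2$, and an exceptional set $\omega\subset R$ with $|\omega|+h\,\Hm^1(\partial^*\omega)\leq C_1(N)(1-\eta)h^2$ and
\[
\int_{R\setminus\omega}|\nabla w_h-A|^2+h^{-2}|w_h-Ax-b|^2\,dx\leq C_1(N)\int_R|ew_h|^2\,dx.
\]

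I then choose $\delta=\delta(\eta,N)$ small enough that $|\omega_z|,|\omega_{z'}|\leq C_0\delta h^2$ is negligible. Setting $S_z:=Q_z\setminus(\omega\cup\omega_z)$, and granting the crucial lower bound $|S_z|\geq c(\eta,N)h^2$ (and analogously for $S_{z'}$), I combine the two Korn estimates on $S_z$ by the triangle inequality to obtain
\[
\int_{S_z}|(A-A_z)x+(b-b_z)|^2\,dx\leq C(N)h^2\int_R|ew_h|^2\,dx.
\]
Since the $L^2$-norm on a positive-measure subset of $Q_z$ (rescaled by $h$ to a rectangle of bounded aspect ratio) controls the coefficients of planar rigid motions via norm equivalence on the three-dimensional space $\Skew(2)\times\R^2$, this translates into $h^2|A-A_z|^2+|b-b_z|^2\leq C(\eta,N)\int_R|ew_h|^2$, and analogously with $z'$ in place of $z$. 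A triangle inequality, together with $h^{-2}\geq 1$ for $h\leq 1$, delivers~\eqref{eq: good bridge}.

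The main obstacle is the measure lower bound $|S_z|\geq c(\eta,N)h^2$: a priori the bound $|\omega|\leq C_1(N)(1-\eta)h^2$ may exceed $|Q_z|=2h^2$ when $C_1(N)(1-\eta)>2$, e.g.\ for small $\eta$ or large $N$. I expect to resolve this by \emph{induction on} $N$, taking \eqref{eq: good neighbors} as the base case $N=1$. For the inductive step, the counting bound $\#\{\text{bad rectangles in }R\}\leq 2(1-\eta)/\delta$ (derived from the jump bound $(1-\eta)h$ together with the fact that each bad rectangle contributes at least $\delta h$ to the jump, with at most a factor $2$ of overlap) guarantees, for $\delta$ sufficiently small depending on $(\eta,N)$, the existence of a $\delta$-good rectangle $Q_{z''}$ in the middle portion of $R$. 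Applying the induction hypothesis to the two sub-convex-hulls $(Q_z,Q_{z''})$ and $(Q_{z''},Q_{z'})$ and chaining via the triangle inequality then completes the argument, at the cost of a constant $C(\eta,N)$ that may grow in $N$.
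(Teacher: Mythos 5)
Your reduction to Proposition \ref{prop: Korn} on the whole bridge $R=\conv(Q_z\cup Q_{z'})$ runs into exactly the obstruction you flag, and that obstruction is not a technicality but the heart of the statement: the hypothesis allows $\Hm^1(J_{w_h}\cap R)$ up to $(1-\eta)h$, i.e.\ comparable to the full height of the strip, so after rescaling the bound on the exceptional set reads $|\omega|\leq C_1(N)(1-\eta)h^2$ and $\Hm^1(\partial^*\omega)\leq C_1(N)(1-\eta)h$ with a constant $C_1(N)\geq 1$ that is in no way close to $1$. At this level of information the Korn inequality cannot distinguish a jump of length $(1-\eta)h$ from one of length $h$ that severs the bridge completely (for which the conclusion is false), and correspondingly $\omega$ may contain a full vertical slab or all of $Q_z$, so the set $S_z$ on which you want to compare $(A,b)$ with $(A_z,b_z)$ can be empty. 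Any argument that only feeds the total jump measure into Proposition \ref{prop: Korn} therefore cannot close; one must use the geometric fact that an $\eta$-fraction of horizontal heights is jump-free. This is what the paper does: it selects, via slicing and Markov/Fubini arguments, two horizontal segments and one slightly tilted segment joining $Q_z\setminus\omega_z$ to $Q_{z'}\setminus\omega_{z'}$ that avoid $J_{w_h}$, controls $(p_i-q_i)\cdot(A_zp_i+b_z)$ along each by the fundamental theorem of calculus for $\GSBD$ slices, and then inverts the resulting $3\times 3$ ``truss'' map using Lemmas \ref{lemma: line determinant} and \ref{lemma: determinant 2d}.

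The proposed repair by induction on $N$ does not rescue the argument. First, the quantifiers matter: the proposition (and its use in Proposition \ref{prop: better compactness}, where $N$ is itself bounded by $C/\delta$) requires $\delta=\delta(\eta)$ independent of $N$; letting $\delta$ depend on $(\eta,N)$ creates a circularity in the application. Second, and decisively, the counting goes the wrong way: shrinking $\delta$ makes the goodness criterion more stringent, so the admissible number of bad rectangles, of order $2(1-\eta)/\delta$, \emph{grows} as $\delta\to 0$. For the fixed $\delta(\eta)$ there are admissible configurations in which every rectangle strictly between $Q_z$ and $Q_{z'}$ is bad (each carrying just over $\delta h$ of jump, the total staying below $(1-\eta)h$ whenever $N\delta\lesssim 1-\eta$); then no intermediate good rectangle $Q_{z''}$ exists, the inductive step has nothing to chain through, and \eqref{eq: good neighbors} is of no use because the neighbours of $Q_z$ are bad. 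So the induction collapses precisely in the regime the proposition is designed to handle, and a genuinely different mechanism --- such as the segment/truss rigidity argument of the paper --- is needed.
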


\begin{figure}
\includegraphics[scale=1]{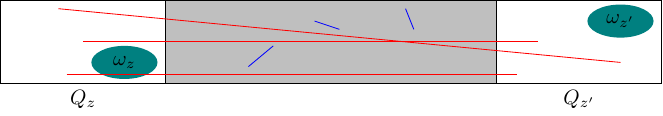}
\caption{
Two good rectangles $Q_z$, $Q_{z'}$ are separated by a series of bad rectangles, but without enough jump set to completely separate the two. Then we can find three line segments connecting the good rectangles that prevent the two from being infinitesimally rotated or shifted against one another.}\label{fig: bridge}
\end{figure}

\begin{proof}
  We assume without loss of generality that $h=1$ (by rescaling) and that $A_{z'},b_{z'} = 0$. This is achieved by replacing $w_h$ with $w_h(x) - A_{z'}x - b_{z'}$, which has the same jump set and elastic strain. We will write $w=w_h$ since $h=1$. We also assume that $z'> z$.

We show that there are three pairs $(p_i,q_i)\in Q_z \times Q_{z'}$, $i=1,2,3$, such that
\begin{equation}\label{eq: rigid motion bound}
\left| \left(A_z p_i + b_z \right)\cdot (p_i- q_i) \right|^2 \leq C(\eta, N) \int_{\conv(Q_z \cup Q_{z'})} |ew|^2 \,dx.
\end{equation}

If we can make sure that the three pairs are in general position, Lemmas \ref{lemma: line determinant} and \ref{lemma: determinant 2d} yield the upper bounds on $|A_z|, |b_z|$. See Figure \ref{fig: bridge} for a visual sketch. We first show how to construct the first two pairs, where $(p_i - q_i) \| e_1$ (the notation $v \| w$ means that the vectors $v,w\in \R^2$ are linearly dependent):

Start with the set of horizontal lines that do not cross $J_{w}$ in the sense of slicing for $\GSBD$ functions, or more precisely
\begin{equation}
H_1 := \{x_2 \in (-1/2,1/2)\,:\, x_1 \mapsto e_1\cdot w(x_1,x_2)\textrm{ is absolutely continuous on }(z - 1, z' + 1)\}.
\end{equation}

We have by the segment regularity in $\GSBD^2$ (see \cite{dal2013generalised}) that $\Hm^1(H_1) \geq \eta$.

We intersect $H_1$ with three large subsets of $(-1/2,1/2)$, namely
\begin{equation}
\begin{aligned}
H_2 := &\bigg\{x_2 \in (-1/2,1/2)\,:\, && \int_{z - 1}^{z'+1} |ew(x_1,x_2)|\,dx_1 \leq \frac8{\eta} \int_{\conv(Q_z \cup Q_{z'})} |ew(x)|\,dx\bigg\},\\
H_3 := &\bigg\{x_2 \in (-1/2,1/2)\,:\, &&\int_0^L (\mathds{1}_{\omega_z} (x_1,x_2) + \mathds{1}_{\omega_{z'}} (x_1,x_2))\,dx_1 \leq \frac8{\eta}(|\omega_z| + |\omega_{z'}|)\bigg\},\\
H_4 := &\bigg\{x_2 \in (-1/2,1/2)\,:\, &&\int_{0}^{L} \mathds{1}_{Q_z \setminus \omega_z}(x_1,x_2)|w(x_1,x_2) - A_z(x_1,x_2)^T - b_z|\,dx_1\\
&&& +  \int_{0}^{L}\mathds{1}_{Q_{z'} \setminus \omega_{z'}}(x_1,x_2)|w(x_1,x_2)|\,dx_1\\
&&& \leq \frac8{\eta}\left( \int_{Q_z \setminus \omega_z} |w(x) - A_zx - b|\,dx + \int_{Q_{z'} \setminus \omega_{z'}} |w(x)|\,dx \right)  \bigg\}.
\end{aligned}
\end{equation}

By Fubini's theorem and Markov's inequality we have $\Hm^1(H_2),\Hm^1(H_3),\Hm^1(H_4) \geq 1- \frac{\eta}{8}$, and thus
\begin{equation}
\Hm^1(H_1 \cap H_2 \cap H_3 \cap H_4) \geq \eta - \frac{3\eta}{8} > \frac{\eta}{2}.
\end{equation}

By Proposition \ref{prop: Korn} we have
\begin{equation}
\frac{8}{\eta}(|\omega_z| + |\omega_{z'}|) \leq \frac{16}{\eta} C \delta.
\end{equation}
If $\delta < \frac{\eta}{16C}$, this ensures that whenever $t\in H_3 \cap H_4$, there are points $x\in Q_z\setminus \omega_z$, $y\in Q_{z'} \setminus \omega_{z'}$ with $x_2 = y_2 = t$, $|x - y| \geq 1$, and
\begin{equation}\label{eq: Markov choice}
|w(x) - A_z x -b_z|  + |w(y)| \leq \frac{8}{\eta}\left( \int_{Q_z \setminus \omega_z} |w(x) - A_zx - b|\,dx + \int_{Q_{z'} \setminus \omega_{z'}} |w(x)|\,dx \right) 
\end{equation}

This allows us to choose the first two pairs $(p_i,q_i)\in (Q_{z}\setminus \omega_z) \times (Q_{z'} \setminus \omega_{z'})$, $i =1,2$ such that $(p_i)_2 = (q_i)_2\in H_1\cap H_2\cap H_3 \cap H_4$,
\begin{equation}
|p_i - q_i| \geq 1, \quad |(p_1)_2 - (p_2)_2| \geq \frac\eta{2}.
\end{equation}

By the definitions of $H_1,H_2,H_3,H_4$ we then have
\begin{equation}
\begin{aligned}
&|(p_i-q_i) \cdot (A_z p_i +b_z)|\\
 \leq & |(p_i-q_i) \cdot (w(p_i) - w(q_i))| + |(p_i-q_i) \cdot (w(p_i) - A_z p_i - b_z)| + |(p_i-q_i) \cdot w(q_i)|\\
\leq & C(\eta,N) \sqrt{\int_{\conv(Q_z \cup Q_{z'})} |ew(x)|^2\,dx},
\end{aligned}
\end{equation}
where in order to estimate the first term we used the fundamental theorem of calculus
\begin{equation}
(p_i-q_i) \cdot (w(p_i) - w(q_i)) = \int_{[p_i,q_i]} (p_i - q_i) \cdot \nabla w\frac{p_i -q_i}{|p_i - q_i|}\,d\Hm^1 = \int_{[p_i,q_i]} (p_i - q_i) \cdot ew \frac{p_i -q_i}{|p_i - q_i|}\,d\Hm^1.
\end{equation}

The fact that we may do so for almost every segment not intersecting the jump set is proved for $\GSBD$ functions in e.g. \cite{dal2013generalised}.

We now repeat the above argument to obtain one more pair $(p_3,q_3)$ with $p_3\in Q_z\setminus \omega_z$, $q_3\in Q_{z'}\setminus \omega_{z'}$. Instead of a horizontal line segment, we choose $(p_3 - q_3 ) \| e_\theta := \frac{1}{\sqrt{1+\theta^2}}(1,\theta)$ with $\theta := \frac{\eta^2}{N+2}>0$. We define analogously to before

\begin{equation}
\begin{aligned}
D_1 := &\bigg\{t \in (-1/2,1/2)\,:\,  s \mapsto e_\theta \cdot w(z+te_\theta^\perp + se_\theta)\textrm{ is absolutely continuous,}\\
&\quad \textrm{where }z+te_\theta^\perp + se_\theta\in \conv(Q_z \cup Q_{z'})\bigg\},\\
D_2 := &\bigg\{t \in (-1/2,1/2)\,:\,  \int_{\R} \mathds{1}_{\conv(Q_z \cup Q_{z'})}(z+te_\theta^\perp + se_\theta) |ew(z + te_\theta^\perp + se_\theta)|\,ds\\
& \quad \leq  \frac{8}{\eta} \int_{\conv(Q_z \cup Q_{z'})} |ew(x)|\,dx\bigg\},\\
D_3 := &\bigg\{t \in (-1/2,1/2)\,:\, \int_{\R} \mathds{1}_{\omega_z} (z+te_\theta^\perp + se_\theta) + \mathds{1}_{\omega_{z'}} (z+te_\theta^\perp + se_\theta)\,ds \leq \frac8{\eta}(|\omega_z| + |\omega_{z'}|)\bigg\},\\
D_4 := &\bigg\{t \in (-1/2,1/2)\,:\, 
  \int_{\R}\mathds{1}_{Q_{z'} \setminus \omega_{z'}}(z+te_\theta^\perp + se_\theta)|w(z+te_\theta^\perp + se_\theta)|\,ds +  \\
&\quad + \int_{\R} \mathds{1}_{Q_z \setminus \omega_z}(z+te_\theta^\perp + se_\theta)|w(z+te_\theta^\perp + se_\theta) - A_z(z+te_\theta^\perp + se_\theta)^T - b_z|\,ds \\ 
& \quad \leq \frac8{\eta}\left( \int_{Q_z \setminus \omega_z} |w(x) - A_zx - b|\,dx + \int_{Q_{z'} \setminus \omega_{z'}} |w(x)|\,dx \right)  \bigg\}.
\end{aligned}
\end{equation}

As before, we have $\Hm^1(D_1) \geq \eta$, $\Hm^1(D_2), \Hm^1(D_3), \Hm^1(D_4) \geq 1- \frac{\eta}{8}$, so that
\begin{equation}
\Hm^1(D_1 \cap D_2 \cap D_3 \cap D_4) \geq \frac{\eta}{2}.
\end{equation}
This allows us to pick a diagonal line $z+t+\R e_\theta$ with $t\in D_1 \cap D_2 \cap D_3 \cap D_4$ and if $\delta < \frac{\eta}{32C}$, we find a pair $p_3\in Q_z\setminus \omega_z$, $q_3\in Q_{z'}\setminus \omega_{z'}$ on the diagonal line with $|p_3 - q_3| \geq 1$ and such that \eqref{eq: Markov choice} holds. Note that as long as $|t \pm 1/2| > \frac{\eta}4$, the diagonal line intersects both $Q_z$ and $Q_{z'}$. By the definitions of $D_1, D_2, D_3, D_4$, \eqref{eq: rigid motion bound} holds also for $(p_3,q_3)$.

Define the linear map $F\in \mathrm{Lin}(\Skew(2)\times \R^2; \R^3)$ by
\begin{equation}
F(A,b) := ((Ap_i + b) \cdot (p_i-q_i))_{i=1,2,3}
\end{equation}
By \eqref{eq: rigid motion bound} we have
\begin{equation}
|F(A_z,b_z)| \leq C(\eta,N) \sqrt{\int_{\conv(Q_z \cup Q_{z'})} |ew|^2\,dx    }.
\end{equation}
Using the identity $F^{-1} = (\det F)^{-1} (\cof F)^T$, we have 
\[
|A_z| + |b_z| \leq |\det F|^{-1} |F|^2 |F(A_z,b_z)|.
\]
We clearly have $|F| \leq (N+3)^3$. By the direct calulations in Lemmas \ref{lemma: line determinant} and \ref{lemma: determinant 2d}, we may estimate
\begin{equation}
|\det F| \geq |p_1 - q_1||p_2 - q_2||p_3 - q_3|\, \frac{\eta}{2}|\sin\theta|^2 \geq c(\eta,N),
\end{equation}
since all three pairs have distance at least $1$, the parallel lines have distance at least $\eta/2$, and the angle of the diagonal line is $\theta$. This shows that
\begin{equation}
|A_z| + |b_z| \leq C(\eta,N) \sqrt{\int_{\conv(Q_z \cup Q_{z'})} |ew|^2\,dx    }
\end{equation}
whenever $\delta< \frac{\eta}{32C}$, completing the proof.

\end{proof}

\begin{rem}
We note here that the above procedure may be generalized to $d=3$ using Lemma \ref{lemma: determinant 3d} and employing $6$ segments instead of $3$. We also note that we only used the bound on $|\omega_z|$, not the one on its perimeter. In theory, it is also possible to employ a similar construction in nonlinear elasticity, using a geometric rigidity estimate as in \cite{FJM02} for $\GSBV^2$ functions with a small jump set.
\end{rem}

\begin{rem}
The choice of line segments connecting two elastic bodies in order to prevent independent rigid motions of either body is encountered in civil engineering in the context of trusses. The proof above, and its three-dimensional version, show that many such trusses exist. Lemmas \ref{lemma: determinant 2d} and \ref{lemma: determinant 3d} are potentially useful to the engineering community in the construction of optimal trusses in bridges, scaffolding, towers etc.
\end{rem}

\section{Proof of compactness}

Here we combine estimates \eqref{eq: good neighbors} and \eqref{eq: good bridge} to show that $\nabla w_h$ is very close in $L^2(\Omega_h;\R^{2\times 2})$ to some $A_h\in \SBV^2((0,L);\Skew(2))$ and $\#J_{A_h} \leq \left\lfloor \liminf_{h\to 0} \frac1h \Hm^1(J_{w_h}) \right\rfloor$.
An additional Poincar\'e inequality then yields estimates for $w_h$.

This will imply the compactness result and be useful for the proof of the lower bound.

\begin{prop}\label{prop: better compactness}
Let $w_h\in \GSBD^2(\Omega_h)$ with $\sup_{h>0} F_h(w_h) < \infty$. Let
\begin{equation}
M:= \left\lfloor \liminf_{h\to 0} \frac1h \Hm^1(J_{w_h}) \right\rfloor.
\end{equation}

Then there is a subsequence (not relabeled), a sequence of sets $\omega_{h} \subset \Omega_{h}$ and sequences $A_{h} \in \SBV^2((0,L);\Skew(2))$, $b_{h}\in \SBV^2((0,L);\R^2)$ such that
\begin{enumerate}[label=(\roman*)]
\item  $\frac1h |\omega_{h}| \to 0$ and $\frac1h \Hm^1(\partial^*\omega_{h}) \leq C$. \label{item: bounds omegah}
\item  $\#(J_{A_{h}} \cup J_{b_{h}}) \leq M$ and
\begin{equation}
\sup_{h > 0}\int_0^L |A_{h}'(x_1)|^2 + |b_{h}'(x_1)|^2\,dx_1 < \infty.
\end{equation} \label{item: bounds Ah bh}
\item 
\begin{equation}
\sup_{h > 0}  \frac1{h^3}\int_{\Omega_{h} \setminus \omega_{h}}|\nabla w_{h}(x) - A_{h}(x_1)|^2 \,dx  < \infty.
\end{equation} \label{item: bound nable wh Ah}
\item \label{item: bound wh - rigid motions} 
\begin{equation} 
\sup_{h > 0} \frac1{h^3} \int_{\Omega_{h} \setminus \omega_{h}} |w_h(x) - x_2A_{h}(x_1)e_2 - b_h(x_1)|^2\,dx < \infty.
\end{equation}
\end{enumerate}
\end{prop}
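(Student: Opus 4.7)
My strategy is to glue together the local rigid motions $(A_z,b_z)$ from Remark \ref{rem: neighboring rectangles} on the good rectangles, using the neighbor estimate \eqref{eq: good neighbors} and Proposition \ref{prop: bridge} to bridge across clusters of bad rectangles that carry little jump, and allowing $A_h, b_h$ to jump only where bridging fails. Each such jump location must carry more than $(1-\eta)h$ of jump length, which yields the sharp count in (ii).

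First I would pass to a subsequence so that $\tfrac{1}{h}\Hm^1(J_{w_h})\to L_0$ with $L_0<M+1$ by definition of $M$, and fix $\eta\in(0,1)$ so small that $L_0/(1-\eta)<M+1$. Set $\delta:=\min(\delta_0,\delta(\eta))$ with $\delta_0$ from Remark \ref{rem: neighboring rectangles} and $\delta(\eta)$ from Proposition \ref{prop: bridge}, and classify each $Q_z$ as $\delta$-good or bad. The energy bound forces $\Hm^1(J_{w_h})\leq Ch$, so the number of bad rectangles is at most $C/\delta$, and I may pick $N\in\N$ so large that no $Nh$-wide subwindow of $(0,L)$ is entirely bad. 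I then declare two consecutive good rectangles $Q_z, Q_{z'}$ (necessarily with $z'-z\leq Nh$) equivalent whenever $\Hm^1(J_{w_h}\cap\conv(Q_z\cup Q_{z'}))\leq (1-\eta)h$, so that either \eqref{eq: good neighbors} (in the adjacent case $z'-z=h$) or \eqref{eq: good bridge} provides a controlled transition between $(A_z,b_z)$ and $(A_{z'},b_{z'})$; the equivalence classes are the chains. Two consecutive chains are separated by a gap carrying strictly more than $(1-\eta)h$ of jump length, so the number of chains minus one is bounded by $\Hm^1(J_{w_h})/((1-\eta)h)<M+1$, hence at most $M$ for all sufficiently small $h$ along the subsequence.

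On each chain I define $A_h(x_1)$ and $b_h(x_1)$ by piecewise affine interpolation between the values $(A_{z_i},b_{z_i})$ at the centers of successive good rectangles in the chain, absorbing the linear-in-$x_1$ part $-x_1 A_h e_1$ of the full rigid motion into $b_h$ so that the ansatz in (iv) takes the form $x_2 A_h e_2 + b_h$. Between chains $A_h, b_h$ jump, producing the $\SBV^2$ structure with the jump count from step 2. Telescoping \eqref{eq: good neighbors} and \eqref{eq: good bridge} along each chain gives
\begin{equation*}
\int_0^L |A_h'|^2+|b_h'|^2\,dx_1 \leq C(N,\eta)\,h^{-3}\int_{\Omega_h}|ew_h|^2\,dx \leq C(N,\eta)\,F_h(w_h),
\end{equation*}
which is uniformly bounded in $h$.

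Take $\omega_h$ to be the union of all bad rectangles and of the Korn exceptional sets $\omega_z$ inside good rectangles. The bad rectangles contribute at most $Ch^2/\delta$ in area and $Ch/\delta$ in perimeter. Rescaling Proposition \ref{prop: Korn} to $Q_z$ of diameter $h$ yields $|\omega_z|\leq Ch\,\Hm^1(J_{w_h}\cap Q_z)$ and $\Hm^1(\partial\omega_z)\leq C\,\Hm^1(J_{w_h}\cap Q_z)$; summing over good $z$ and using $\sum_z\Hm^1(J_{w_h}\cap Q_z)\leq 2\Hm^1(J_{w_h})\leq Ch$ gives total area bounded by $Ch^2$ and total perimeter bounded by $Ch$. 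Thus $|\omega_h|/h\to 0$ and $\Per(\omega_h)/h\leq C$. The approximations (iii) and (iv) then follow by combining the Korn bounds on each good $Q_z$ with the chainwise telescoping estimate $|A_h(x_1)-A_z|^2\leq C(N,\eta)h^{-2}\int_{\text{neighboring}}|ew_h|^2$, which exactly propagates the $h^3$ scaling. The main obstacle is the sharp count $\#(J_{A_h}\cup J_{b_h})\leq M$: it forces the order of quantifiers $L_0\to\eta\to\delta\to N\to h$, since $\eta$ must be chosen after $L_0$ (so that $L_0/(1-\eta)<M+1$), $\delta$ via Proposition \ref{prop: bridge} after $\eta$, and $N$ after $\delta$ to rule out long entirely-bad windows.
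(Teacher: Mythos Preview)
Your proposal is correct and follows essentially the same route as the paper's proof: both classify good/bad rectangles, group bad rectangles into connected components, use Proposition~\ref{prop: bridge} to bridge across components carrying at most $(1-\eta)h$ of jump, allow $A_h,b_h$ to jump only at components carrying more (yielding the sharp count $\leq M$), define $A_h,b_h$ by piecewise affine interpolation of the local rigid motions, and take $\omega_h$ as the union of bad rectangles and the Korn exceptional sets. Your explicit remark about absorbing the $x_1 A_h e_1$ part into $b_h$ to match the form of (iv) is in fact a point the paper's write-up leaves somewhat implicit, and your order of quantifiers $L_0\to\eta\to\delta\to N$ is exactly the one used there (with $N\leq C/\delta$ coming from the global bound on the number of bad rectangles).
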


\begin{proof}
First, we may assume that $\liminf_{h \to 0} \frac1h \Hm^1(J_{w_h}) = \lim_{h\to 0} \frac1h \Hm^1(J_{w_h})$.
Now, let $h >0$ so small that $\frac1h \Hm^1(J_{w_h}) < M+1$.
This implies for all $\eta > 0$ small enough that it holds 
\[
\frac1{(1-\eta)h} \Hm^1(J_{w_h}) < M+1.
\]
In addition, let $\delta = \delta(\eta) > 0$ be as in Proposition \ref{prop: bridge}.
 
For $z \in \{h,2h, \dots, (\lfloor L/h \rfloor-1) h\}$, we write $Q_z = (z-h,z+h) \times (-h/2,h/2)$. 
We recall from Definition \ref{def: good rectangles} that $Q_z$ is called a $\delta$-good rectangle if $\Hm^1(J_{w_h} \cap Q_z) \leq \delta h$.
We write
\begin{equation}
\mathcal{G}_h = \{Q_z: z\in \{h,2h,\dots, (\lfloor L/h \rfloor-1) h\}, Q_z \text{ is a $\delta$-good rectangle} \}.
\end{equation}
Then we apply Proposition \ref{prop: Korn} to each $Q_z \in \mathcal{G}_h$ and obtain matrices $A_z \in \Skew(2)$, vectors $b_z \in \R^2$ and sets of finite perimeter $\omega_z \subseteq Q_z$ such that
\begin{equation} \label{eq: Korn and Korn Poincare rectangle}
\int_{Q_z \setminus \omega_z} |\nabla w_h - A_z|^2 + h^{-2} |w_h - A_z (x - (z,0)) - b_z|^2  \,dx \leq C \int_\Omega |e w_h|^2\,dx,
\end{equation}
and
\begin{equation}
|\omega_z| + \Hm^{1}(\partial^* \omega_z) \leq C \Hm^{1}(J_{w_h} \cap Q_z).
\end{equation}
Let us recall from Proposition \ref{prop: bridge}, c.f.~also Remark \ref{rem: neighboring rectangles}, that for the choice of $\delta$ and two neighboring rectangles $Q_z,Q_{z+h} \in \mathcal{G}_h$ it holds
\begin{equation} \label{eq: neighboring good rectangles}
h^2 |A_z - A_{z+h}|^2 +  h^2|b_z - b_{z+h}|^2 \leq C \int_{Q_z \cup Q_{z+h}} |e w_h|^2.
\end{equation}
Moreover, we write 
\begin{equation}
\mathcal{B}_h = \{Q_z: z\in \{h,2h,\dots, (\lfloor L/h \rfloor-1) h\}, Q_z \text{ is a $\delta$-bad rectangle} \}.
\end{equation}
As $E_h(w_h) \leq C$, we obtain that $\#\mathcal{B}_h \leq \frac{C}{\delta}$. 
Let us now denote by $(C_h^k)_{k=1}^{K_h}$ the connected components of $\bigcup_{Q_z \in \mathcal{B}_h} Q_z$, where $K_h \leq \frac{C}{\delta}$.
We define the connected components with a large jump set as
\begin{equation}
\mathcal{J}_h = \left\{ C_h^k: \frac1h \Hm^1(J_{w_h} \cap C_h^k) \geq 1 - \eta\right\}.
\end{equation}
By the choice of $\eta$ we find that $\# \mathcal{J}_h < M+1$ which implies that $\# \mathcal{J}_h \leq M$. 
Moreover, by Proposition \ref{prop: bridge} we find for our choice of $\delta$ and $C_h^k \notin \mathcal{J}_h$ that 
\begin{equation}\label{eq: bridge bad rectangles}
h^2 |A_z - A_{z'}|^2 +  h^2 |b_z - b_{z'}|^2 \leq C(\eta, N) \int_{\conv(Q_z \cup Q_{z'})} |e w_h|^2\,dx,
\end{equation}
where $Q_z, Q_{z'} \in \mathcal{G}_h$ are the good rectangles neighboring $C_h^k$ and $N = |z-z'| / h \leq \#\mathcal{B}_h \leq \frac{C}{\delta}$.

Now we can construct the functions $A_h \in \SBV((0,L);\Skew(2))$ by linearly interpolating the values $A_z$ between neighboring good rectangles and over connected components of bad rectangles which do not carry a lot of jump set, see Figure \ref{fig: interpolation}.  
Precisely, we define
\begin{equation}
A_h(t) = \begin{cases}
\frac{z+h - t}{h} A_z + \frac{t - z}{h} A_{z+h} &\text{ if } t \in (z,z+h) \text{ and } Q_z,Q_{z+h} \in \mathcal{G}_h, \\
\frac{z'-t}{z'-z} A_z + \frac{t-z}{z'-z} A_{z'} &\text{ if }  C_h^k = (z,z') \times (-h/2,h/2) \notin \mathcal{J}_h, \\ 
&\text{ and } t \in (z,z'), \\
\1_{(z, (z+ z')/2)}(t) A_z + \1_{((z+ z')/2, z')}(t) A_{z'} &\text{ if }  C_h^k = (z,z') \times (-h/2,h/2) \in \mathcal{J}_h, \\ 
&\text{ and } t \in (z,z').
\end{cases}
\end{equation}
If not already defined, we extend this definition constantly onto the intervals $(0,h)$ and $((\lfloor L/h \rfloor-1) h,L)$.

\begin{figure}
\begin{subfigure}{1\textwidth}\centering

\includegraphics[scale=1.3]{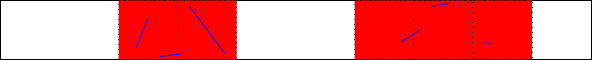}\subcaption{Sketch of the situation in $\Omega_h$. The bad rectangles are sketched in red, the jump set of $w_h$ is sketched in blue. The size of the jump set of $w_h$ in the left connected component of the union of bad rectangles contains more than $1-\eta$, whereas in the right connected component it is less than $1-\eta$.}
\end{subfigure}
\begin{subfigure}{1\textwidth}\centering

\includegraphics[scale=1.3]{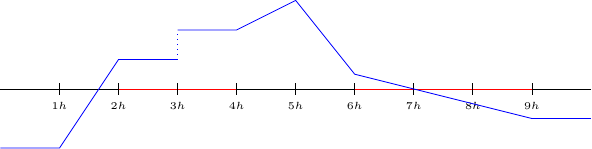} 
\subcaption{Sketch of the interpolation procedure to construct $A_h$. Between neighboring good cubes with center $kh$ and $(k+1)$ we interpolate linearly. In connected components of the union of the bad rectangles in which the jump set of $w_h$ is larger than $1-\eta$ the function $A_h$ jumps (left). In connected components in which the jump set of $w_h$ is less than $1-\eta$ we interpolate using Proposition \ref{prop: bridge}.}
\end{subfigure}
\caption{Interpolation procedure to construct $A_h$.}
\label{fig: interpolation}
\end{figure}

We define the function $b_h$ by interpolating in a similar fashion between the values $b_z$.

First we notice that the functions $A_h$ and $b_h$ can only jump at the center of connected components in $\mathcal{J}_h$. Consequently, 
\begin{equation}
\#(J_{A_h} \cup J_{b_h}) \leq \#\mathcal{J}_h \leq M.
\end{equation}
Moreover, using \eqref{eq: neighboring good rectangles} and \eqref{eq: bridge bad rectangles} we obtain (note that $A_h'$ and $b_h'$ denote the absolutely continuous part of $D A_h$ and $D b_h$, respectively)
\begin{align}
\int_0^L |A_h'|^2 +  |b_h'|^2 \,dx_1 \leq C(\eta) h^{-3}\int_{\Omega_h} |e w_h|^2 \, dx \lesssim C(\eta) h^{-3} F_h(w_h) \lesssim C(\eta).
\end{align}
This shows \ref{item: bounds Ah bh}.

Next, we define the exceptional set $\omega_h$ as the union of the exceptional sets on the good rectangles, all bad rectangles and a boundary layer, i.e.
\begin{equation}
\omega_h = \bigcup_{Q_z \in \mathcal{G}_h} \omega_z \cup \bigcup_{Q_z \in \mathcal{B}_h} Q_z \cup ((\lfloor L/h \rfloor -1 )h, L) \times (-h/2,h/2).
\end{equation}
It follows from the properties of $\omega_z \subseteq Q_z$ that
\begin{equation}
\Hm^1(\partial^* \bigcup_{Q_z \in \mathcal{G}_h} \omega_z) \leq \sum_{Q_z \in \mathcal{G}_h} \Hm^1(\partial^* \omega_z) \leq C\sum_{Q_z \in \mathcal{G}_h} \Hm^1(Q_z \cap J_{w_h}) \leq 2C \Hm^1(J_{w_h}). 
\end{equation}
By the the subadditivity of the squareroot and the isoperimetric inequality it follows that
\begin{align}
\mathcal{L}^2\left( \bigcup_{Q_z \in \mathcal{G}_h} \omega_z \right)^{\frac12} \leq \sum_{Q_z \in \mathcal{G}_h} \mathcal{L}^2(\omega_z)^{\frac12} \leq C \sum_{Q_z \in \mathcal{G}_h} \Hm^1(\partial^* \omega_z) \leq 2 C \Hm^1(J_{w_h}).  
\end{align}
Hence, it follows \Jedit{(recall $\# \mathcal{B}_h \leq \frac{C}{\delta}$)}
\begin{align}
&\Hm^1(\partial^* \omega_h) \leq C \Hm^1(J_{w_h}) + 2 \#\mathcal{B}_h h + h \leq C(1+1/\delta) h, \\
 &\mathcal{L}^2(\omega_h) \leq C \Hm^1(J_{w_h})^2 + \# \mathcal{B}_h h^2 + h^2 \leq C(1+1/\delta) h^2,
\end{align} 
which shows \ref{item: bounds omegah}.

In order to prove \ref{item: bound nable wh Ah} we estimate using \eqref{eq: Korn and Korn Poincare rectangle} and H\"older's inequality 
\begin{align}
&\int_{\Omega_h \setminus \omega_h} |\nabla w_h(x) - A_h(x_1)|^2 \, dx \\
\leq &2 \sum_{Q_z \in \mathcal{G}_h} \int_{Q_z \setminus \omega_z} |\nabla w_h(x) - A_z|^2 + |A_z - A_h(x_1)|^2 \, dx \\
\leq &C \sum_{Q_z \in \mathcal{G}_h} \int_{Q_z} |ew_h|^2 \, dx + \sum_{Q_z \in \mathcal{G}_h} 2h^3 \int_{z-h}^{z+h} |A_h'(x_1)|^2 \, dx_1 \\
\leq &2C \int_{\Omega_h} |ew_h|^2 \, dx + 4  h^3 \int_0^L |A_h'(x_1)|^2 \, dx_1 \\
 \leq &C h^3. \label{eq: estimate nala wh - Ah}
\end{align}
Note that for the last inequality we used \ref{item: bounds Ah bh}.

It remains to show \ref{item: bound wh - rigid motions}.
We recall that by Proposition \ref{prop: Korn} we have for all $Q_z \in \mathcal{G}_h$ that
\begin{equation}
\int_{Q_z \setminus \omega_z} |w_h(x) - A_zx - b_z|^2 \, dx \Jedit{\leq C h^2 \int_{Q_z} |ew|^2\,dx}.
\end{equation}
Hence, we obtain from the definition of $\omega_h$ and similar estimates as in \eqref{eq: estimate nala wh - Ah} that
\begin{align}
&\int_{\Omega_h \setminus \omega_h} |w_h(x) - A_h(x_1) x - b_h(x_1)|^2 \, dx \\
\leq &\sum_{Q_z \in \mathcal{G}_h} \int_{Q_z \setminus \omega_z} |w_h(x) - A_h(x_1) x - b_h(x_1)|^2 \, dx \\
\leq &C \sum_{Q_z \in \mathcal{G}_h} \int_{Q_z \setminus \omega_z} |w_h(x) - A_z x - b_z|^2 + |A_z - A_h(x_1)|^2 + |b_z - b_h(x_1)|^2 \, dx \\
\leq &C \sum_{Q_z \in \mathcal{G}_h} h^2 \int_{Q_z} |e w_h|^2 \, dx + \sum_{Q_z \in \mathcal{G}_h} 2h^3 \int_{z-h}^{z+h} |A_h'(x_1)|^2 + |b_h'(x_1)|^2 \, dx_1 \\
\leq & C h^2 F_h(w_h) + C h^3 \int_0^L  |A_h'(x_1)|^2 + |b_h'(x_1)|^2 \, dx_1 \\ 
\leq &C h^3.
\end{align}

\end{proof}

\begin{proof}[Proof of Theorem \ref{thm: compactness}]
Let us define $T_h: \Omega_h \to \Omega_1$ by $T_h(x_1,x_2) = (x_1,x_2/h)$. 
Then we write $\sigma_h = T_h(\omega_h)$, where $\omega_h$ is the set constructed in Proposition \ref{prop: better compactness}. 
From Proposition \ref{prop: better compactness} \ref{item: bounds omegah} we immediately obtain \ref{item: bounds baromegah}.

Next, we write 
\begin{equation}
M = \left\lfloor  \liminf_{h\to 0}\int_{J_{y_h}} (\nu_1,\frac1h \nu_2) \, d\Hm^1  \right\rfloor.
\end{equation}
Let $A_h$ and $b_h$ be the functions from Proposition \ref{prop: better compactness}. 
Recall from \ref{item: bounds Ah bh} in Proposition \ref{prop: better compactness} that $A_h \in \SBV((0,L);\Skew(2))$ and $b_h \in \SBV((0,L);\R^2)$ with
\begin{equation} \label{eq: estimate Ah and bh}
\#(J_{A_h} \cup J_{b_h}) \leq M \text{ and } \sup_h \int_0^L |A_{h}'(x_1)|^2 + |b_{h}'(x_1)|^2\,dx_1 < \infty.
\end{equation}
We write $J_{A_h}\cup\{0,L\} = \{t_1, \dots, t_{N_h}\}$ and define the function $\bar{A}_h: (0,L) \to \Skew(2)$ by (see also Figure \ref{fig: piecewise constant})
\begin{equation}
\bar{A}_h(t) = \fint_{t_i}^{t_{i+1}} A_h(s) \, ds \text{ if } t \in (t_i,t_{i+1}).
\end{equation}
It follows immediately that $\bar{A}_h$ is piecewise constant and $J_{\bar{A}_h} \subseteq J_{A_h}$. 
By a minor modification of $\bar{A}_h$, if necessary, we may assume that $J_{\bar{A}_h} = J_{A_h}$. 

\begin{figure}
\centering
\includegraphics[scale=1.3]{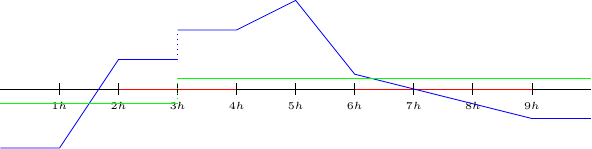}
\caption{Sketch of the construction of $\bar{A}_h$. The function $A_h$ evolves from piecewise interpolation. Its graph (more precisely, the graph of the upper right entry of the matrix field $A_h$) is sketched in blue. The corresponding graph of $\bar{A}_h$ (i.e. the upper right entry of the matrix field $\bar{A}_h$) is sketched in green. The field $\bar{A}_h$ is constant between jump points of $A_h$.}
\label{fig: piecewise constant}
\end{figure}

We deduce from \eqref{eq: estimate Ah and bh} that $\| A_h - \bar{A}_h \|_{L^{\infty}(\Omega_1;\R^{2\times 2})} \leq C$ and that $A_h - \bar{A}_h$ is a bounded sequence in $\SBV^2((0,L);\Skew(2))$.
By the compactness properties of the space $\SBV^2$, there exists a (not relabeled) subsequence and $A \in \SBV^2((0,L);\Skew(2))$ such that $A_h - \bar{A}_h \to A$ in $L^1((0,L);\R^{2\times 2})$, $A_h' \rightharpoonup A'$ in $L^2((0,L);\R^{2\times 2})$ and $\#J_{A} \leq \liminf_h \# J_{A_h}$.

The definition of $\bar{b}_h$ is completely analogous. 
Again, we may assume without loss of generality that $J_{\bar{b}_h} = J_{b_h}$ and obtain that $\| b_h - \bar{b}_h\|_{L^{\infty}((0,L);\R^2)} \leq C$. Then there exists a (not relabeled) subsequence and $b \in \SBV^2((0,L);\R^2)$ such that $b_h - \bar{b}_h \to b$ in $L^1((0,L);\R^2)$, $b_h' \rightharpoonup b'$ in $L^2((0,L);\R^2)$, and $\# J_{b} \leq \liminf_{h \to 0} \# J_{b_h}$.

Let us now define the functions $u_h: \Omega_h \to \R^2$ by
\begin{equation}
u_h(x) = (A_h(x_1) - \bar{A}_h(x_1) )x + (b_h(x_1) - \bar{b}_h(x_1)).
\end{equation}
It follows that $u_h \in \SBV^2(\Omega_h;\R^2)$.
Moreover, we obtain from the bounds and properties of $A_h - \bar{A}_h$ and $b_h - \bar{b}_h$ that
\begin{equation}\label{eq: properties uh}
\| u_h \|_{L^{\infty}(\Omega_h;\R^2)} \leq C, \int_{\Omega_h} |\nabla u_h|^2 \, dx \leq C h \text{ and } J_{u_h} \subseteq  (J_{A_h} \cup J_{b_h}) \times (-h/2,h/2).
\end{equation}
Accordingly, we define the rescaled functions $\bar{u}_h \in \SBV^2(\Omega_1; \R^2)$ by
\begin{equation}\label{eq: def baruh}
\bar{u}_h(x_1,x_2) = u_h(x_1,hx_2) = (A_h(x_1) - \bar{A}_h(x_1) ) (x_1,hx_2)^T + (b_h(x_1) - \bar{b}_h(x_1)),
\end{equation} 
which satisfies by \eqref{eq: properties uh} 
\begin{equation}\label{eq: properties baruh}
\| \bar{u}_h \|_{L^{\infty}(\Omega_1;\R^2)} \leq C, \int_{\Omega_1} |\nabla \bar{u}_h|^2 \, dx \leq C \text{ and } J_{\bar{u}_h} \subseteq  (J_{A_h} \cup J_{b_h}) \times (-1/2,1/2).
\end{equation}
In particular, $\bar{u}_h$ is a bounded sequence in $\SBV^2(\Omega_1;\R^2)$. 
Moreover, by the definition, \eqref{eq: def baruh}, and the compactness properties of $A_h - \bar{A}_h$ and $b_h - \bar{b}_h$ it follows $\bar{u}_h \to y$ in $L^1(\Omega_1;\R^2)$, where
\begin{equation} \label{eq: specific form y}
y(x_1,x_2) = A(x_1)(x_1,0)^T + b(x_1).
\end{equation}
In particular, $y$ does not depend on $x_2$.
In order to show $y \in \mathcal{A}$ we still need to prove that $\partial_1 y_1 = 0$ and $\partial_1 y_2 \in \SBV(\Omega_1)$.

Next, let us define $\tilde{w}_h: \Omega_h \to \R^2$ by $\tilde{w}_h(x) = \left(w_h(x) - \bar{A}_h(x_1)x - \bar{b}_h(x_1)\right) \1_{\Omega_h \setminus \omega_h}$. 
From Proposition \ref{prop: better compactness} \ref{item: bounds omegah} and \ref{item: bound nable wh Ah} we deduce that $\tilde{w}_h \in \GSBV^2(\Omega_h;\R^2)$ and
\begin{align}
\int_{\Omega_h} |\nabla \tilde{w}_h|^2 \, dx &= \int_{\Omega_h \setminus \omega_h} |\nabla w_h - \bar{A}_h|^2 \, dx \\
&\leq 2 \int_{\Omega_h \setminus \omega_h} |\nabla w_h - A_h|^2 + |A_h - \bar{A}_h|^2 \, dx \\
&\leq C(h^3 + h).
\end{align}
Moreover, $\Hm^1(J_{\tilde{w}_h}) \leq \Hm^1(\partial^* \omega_h) + \Hm^1(J_{w_h}) + h \#(J_{A_h} \cup J_{b_h}) \leq Ch$. 
It follows from Proposition \ref{prop: better compactness} \ref{item: bound wh - rigid motions} that
\begin{equation}
\int_{\Omega_h} |\tilde{w}_h|^2 \, dx \leq C h.
\end{equation}
The above shows that the functions $\tilde{y}_h: \Omega_1 \to \R^2$ defined by 
\[
\tilde{y}_h(x_1,x_2) = \left(y_h(x_1,x_2) - \bar{A}_h(x_1)  (x_1,hx_2) - \bar{b}_h(x_1)\right) \1_{\Omega_1 \setminus \sigma_h}
\]
form a bounded sequence in $L^2(\Omega_1;\R^2)$, are in $\GSBV^2(\Omega_1;\R^2)$ and satisfy
\begin{equation}\label{eq: bounds tildeyh}
\int_{\Omega_1} | \nabla_h \tilde{y}_h|^2 \, dx \leq C \text{ and } \int_{J_{\tilde{y}_h}} |(\nu_1, \frac1h \nu_2)| \, d\Hm^1 \leq C,
\end{equation}
where $\nu \in S^1$ is the measure theoretic normal to $J_{\tilde{y}_h}$.
It follows immediately that $\tilde{y}_h$ is a bounded sequence in $\GSBV^2(\Omega_1;\R^2)$.

Moreover, it follows from Proposition \ref{prop: better compactness} \ref{item: bound wh - rigid motions} that
\begin{align}
\tilde{y}_h - \bar{u}_h \1_{\Omega_1 \setminus \sigma_h} \to 0 \text{ in } L^2(\Omega_1;\R^2).
\end{align}
As $\mathcal{L}^2(\sigma_h) \to 0$ (c.f.~Proposition \ref{prop: better compactness} \ref{item: bounds omegah}), it follows that $\tilde{y}_h \to y$ in $L^2(\Omega_1;\R^2)$, which is \ref{item: convergence y_h}.
By the compactness properties of $\GSBV^2(\Omega_1;\R^2)$ it even follows that 
\begin{equation}
\nabla \tilde{y}_h \rightharpoonup \nabla y \text{ in } L^2(\Omega_1;\R^{2\times 2}).
\end{equation}
On the other hand, we find that
\begin{equation}\label{eq: conv nablah yh}
(\partial_1 \tilde{y}_h, \frac1h \partial_2 \tilde{y}_h) = \left(\nabla_h y_h - A_h\right) \1_{\Omega_1 \setminus \bar{\omega}_1} + \left(A_h - \bar{A}_h\right) \1_{\Omega_1 \setminus \bar{\omega}_1} \rightharpoonup A \text{ in } L^2(\Omega_1;\R^{2\times 2}).
\end{equation}
However, this implies that $A = (\partial_1 y, c)$ for a function $c \in \SBV^2(\Omega_1;\R^2)$. 
As $A$ is skew-symmetric, it follows immediately that $\partial_1 y_1 = 0$. 
Moreover, we deduce from the fact that $A \in \SBV^2((0,L);\Skew(2))$ and that $\partial_1 y \in \SBV^2(\Omega_1)$.
This concludes the proof of $y \in \mathcal{A}$.

It remains to show the inequalities in \ref{item: inequality jumpsets}.
First, let us recall that by construction we have $J_{\bar{A}_h} \cup J_{\bar{b}_h} = J_{A_h} \cup J_{b_h}$.
Consequently, it follows from Proposition \ref{prop: better compactness} \ref{item: bounds Ah bh} and the usual change of coordinates that 
\begin{equation}\label{eq: bound JbarAh}
\#(J_{\bar{A}_h} \cup J_{\bar{b}_h}) \leq \left\lfloor\liminf_{h\to 0} \int_{J_{y_h}} |(\nu_1,\frac1h \nu_2)| \,d\Hm^1 \right\rfloor,
\end{equation}
which is the second inequality in \ref{item: inequality jumpsets}.
In addition, we note that $J_{\partial_1 y} \subseteq J_A \times (-1/2,1/2)$.
Moreover, by the specific form of $y$, \eqref{eq: specific form y}, we have $J_y \subseteq J_{A} \cup J_{b}$.
However, lower semicontinuity yields $\#(J_A \cup J_b) \leq \liminf_h \#( J_{A_h - \bar{A}_h} \cup J_{b_h - \bar{b}_h}) \leq \liminf_h \#(J_{\bar{A}_h} \cup J_{\bar{b}_h})$.
Consequently,
\begin{equation}
\Hm^1(J_y \cup J_{\partial_1 y}) \leq \liminf_h \#(J_{A_h} \cup J_{b_h}),
\end{equation}
which yields using \eqref{eq: bound JbarAh} the remaining first inequality of \ref{item: inequality jumpsets}.
\end{proof}

\section{The lower bound}

\Jedit{In this section, we will prove the lower bound claimed in Theorem \ref{thm: Gamma limit}. 
As usual we may consider sequences $(y_h)_h$ which are equibounded in energy. In particular, due to the compactness theorem, up to a subsequence this sequence has a limit $y$ in the sense of Theorem \ref{thm: compactness}.}
First we prove the following result to identify the limit of the rescaled gradient of $y_h$.

\begin{prop} \label{prop: identification limit}
Define the function $W_h: \Omega_1 \to \R$ as
\begin{equation}
W_h(x_1,x_2) := \frac1h \mathds{1}_{\Omega_1 \setminus \sigma_h} \partial_1 (y_h)_1(x_1,x_2) ,
\end{equation}
where $\sigma_h = \{(x_1,x_2) \in \Omega_1: (x_1,hx_2) \in \omega_h \}$ for $\omega_h \subseteq \Omega_h$ from Proposition \ref{prop: better compactness}.

Then, in the setting of Theorem \ref{thm: compactness}, every subsequence of $W_h$ has a subsequence converging weakly in $L^2(\Omega_1)$ to a function $W \in L^2(\Omega_1)$ of the form 
\[
W(x_1,x_2)= -x_2 \, \partial_1 \partial_{1} y_2(x_1) + T(x_1),
\]
where $T \in L^2((0,L))$ and $y\in \mathcal{A}$ is any function for which the conclusion of Theorem \ref{thm: compactness} hold.
\end{prop}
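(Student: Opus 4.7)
The plan is to extract a weakly convergent $L^2(\Omega_1)$ subsequence of $W_h$ and identify the limit by a mixed-partials (Schwarz) argument for the scalar function $(\tilde y_h)_1$, where $\tilde y_h$ is the modified sequence from the proof of Theorem \ref{thm: compactness}. The key input from the compactness theorem is that the companion entry $(\nabla_h \tilde y_h)_{12}\mathds{1}_{\Omega_1\setminus\bar\omega_h}$ converges weakly to $-\partial_1 y_2(x_1)$.

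\emph{Weak subsequential limit.} The coercivity $F:\C F\geq c|F+F^T|^2$ and the uniform bound $F_h(w_h)\leq C$ give $\int_{\Omega_1}|e(\nabla_h y_h)|^2\,dx\leq Ch^2$. Since $(e(\nabla_h y_h))_{11} = \partial_1(y_h)_1$, this yields $\|W_h\|_{L^2(\Omega_1)}\leq C$, so every subsequence of $W_h$ admits a further subsequence converging weakly in $L^2(\Omega_1)$ to some $W$.

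\emph{Mixed-partials identity.} Set $\hat Z_h := (\nabla_h\tilde y_h)_{12}\mathds{1}_{\Omega_1\setminus\bar\omega_h} = \frac{1}{h}\partial_2(\tilde y_h)_1\mathds{1}_{\Omega_1\setminus\bar\omega_h}$. By Theorem \ref{thm: compactness}, $\hat Z_h\rightharpoonup A_{12} = -\partial_1 y_2(x_1)$ in $L^2(\Omega_1)$. Testing the distributional Schwarz identity $\partial_1\partial_2(\tilde y_h)_1 = \partial_2\partial_1(\tilde y_h)_1$ against a test function $\phi\in C_c^\infty(\Omega_1)$ whose support avoids the finite set $J_{\partial_1 y_2}\times(-1/2,1/2)$ yields, after division by $h$,
\begin{equation*}
\int W_h\,\partial_2\phi\,dx = \int \hat Z_h\,\partial_1\phi\,dx + R_h(\phi),
\end{equation*}
where the remainder
\begin{equation*}
R_h(\phi) := \frac{1}{h}\int_{J_{\tilde y_h}}[(\tilde y_h)_1]\bigl(\nu_2\partial_1\phi - \nu_1\partial_2\phi\bigr)\,d\Hm^1
\end{equation*}
collects all jump contributions of $(\tilde y_h)_1$, including those coming from $\partial\bar\omega_h$ and from the jumps of $\bar A_h$ and $\bar b_h$. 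Together with the $\SBV$ integration by parts $-\int\partial_1 y_2\,\partial_1\phi\,dx = \int\partial_1\partial_1 y_2\,\phi\,dx$ (whose jump contribution vanishes by the choice of $\phi$), this leads to $\partial_2 W = -\partial_1\partial_1 y_2$ in $\mathcal D'(\Omega_1\setminus(J_{\partial_1 y_2}\times(-1/2,1/2)))$, provided $R_h(\phi)\to 0$.

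\emph{Vanishing of $R_h$ (main obstacle) and conclusion.} The main technical step is showing $R_h(\phi)\to 0$. By Proposition \ref{prop: better compactness}, $J_{\tilde y_h}\subseteq\partial\bar\omega_h\cup J_{y_h}\cup(J_{\bar A_h}\cup J_{\bar b_h})\times(-1/2,1/2)$ has $\Hm^1$-measure $O(h)$ and the rescaled measure $(\nu_1,\nu_2/h)\,d\Hm^1|_{J_{\tilde y_h}}$ has uniformly bounded total variation, but the jump amplitudes $[(\tilde y_h)_1]$ are not a priori bounded in the $\GSBD$ framework. I would bypass this by replacing $\tilde y_h$ by a Sobolev-type approximation $u_h\in W^{1,2}(\Omega_1;\R^2)$ obtained from Proposition \ref{prop: Korn} applied rectangle by rectangle, which coincides with $\tilde y_h$ outside a set of vanishing Lebesgue measure (this is the isoperimetric bound already exploited in the proof of Proposition \ref{prop: better compactness}) and still satisfies $\frac{1}{h}e(\nabla_h u_h)$ bounded in $L^2$. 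For $u_h$ Schwarz's identity holds without any jump correction, and $\frac{1}{h}\partial_1(u_h)_1$ has the same weak $L^2$ limit $W$ as $W_h$, because the $L^2$-bounded difference is supported on a set of vanishing Lebesgue measure and is therefore weakly null by absolute continuity of the $L^2$ norm. Running the mixed-partials argument with $u_h$ in place of $\tilde y_h$ then yields $\partial_2 W = -\partial_1\partial_1 y_2$ in $\mathcal D'(\Omega_1\setminus(J_{\partial_1 y_2}\times(-1/2,1/2)))$. Consequently $W+x_2\partial_1\partial_1 y_2$ is $x_2$-independent on each of the finitely many vertical strips between jump lines; calling this function $T(x_1)$, the $L^2$-integrability of $W$ and of $x_2\partial_1\partial_1 y_2$ gives $T\in L^2((0,L))$ and $W(x_1,x_2) = -x_2\partial_1\partial_1 y_2(x_1) + T(x_1)$ a.e.
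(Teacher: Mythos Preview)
Your plan is conceptually the same mixed-partials argument the paper uses, but the Sobolev-replacement step you propose to kill $R_h$ does not go through, and that is where the real work lies. The obstruction is anisotropy. Proposition~\ref{prop: Korn} controls only the \emph{isotropic} symmetric gradient: applied on $\Omega_1$ it yields $\int_{\Omega_1}|eu_h|^2\le C\int_{\Omega_1}|e\tilde y_h|^2$, and the right-hand side is only $O(1)$ because the off-diagonal entries of $e\tilde y_h$ contain the unrescaled $\partial_2$. So you cannot conclude that $\tfrac1h\partial_1(u_h)_1$ is bounded in $L^2$, and your ``$L^2$-bounded difference supported on a vanishing set is weakly null'' step has no input. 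Applying Korn in $\Omega_h$ would give the right anisotropic scaling, but the Korn constant of a thin rectangle blows up as $h\to 0$; applying it rectangle by rectangle on the $Q_z$ keeps both constant and scaling under control, but the local Sobolev replacements do not patch to a global $W^{1,2}$ function without reintroducing jumps of uncontrolled amplitude --- so $R_h$ reappears. (Truncating $(\tilde y_h)_1$ at level $M$ does not help either: the $\nu_1$-part of $R_h$ is then of order $M/h$, since $\int_{J_{\tilde y_h}}|\nu_1|\,d\Hm^1$ is only bounded, not $o(h)$.)

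The paper sidesteps any Sobolev replacement by running a \emph{finite-difference} Schwarz argument. It tests against the second difference
\[
V_h(x)=\frac{\mathds{1}_h(x)\,\mathds{1}_{\Omega_1\setminus\bar\omega_h}(x)}{h\,s_h}\Bigl[(y_h)_1(x+ze_2+s_he_1)-(y_h)_1(x+ze_2)-(y_h)_1(x+s_he_1)+(y_h)_1(x)\Bigr],
\]
where $\mathds{1}_h$ restricts to those $x$ for which the boundary of the rectangle with corners $x$, $x+s_he_1$, $x+ze_2$, $x+ze_2+s_he_1$ avoids $J_{y_h}$, $\partial^*\bar\omega_h$, $J_{A_h}$ and $J_A$. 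On such rectangles the $\GSBD$ slicing property licenses the fundamental theorem of calculus along the sides \emph{without any jump term}; the anisotropic crack-length bounds force $\mathds{1}_h\to 1$ in $L^1$ as $s_h\to 0$. Evaluating $\lim_h\int V_h f\,dx$ by integrating first in $e_1$ gives $\int(W(x+ze_2)-W(x))f\,dx$; evaluating it by integrating first in $e_2$, choosing $s_h=h^{1/2}$ and using the strong bound $\|\tfrac1h\partial_2 y_h - A_he_2\|_{L^2(\Omega_1\setminus\bar\omega_h)}=O(h)$ from Proposition~\ref{prop: better compactness}~\ref{item: bound nable wh Ah}, gives $z\int (A^{1,2})' f\,dx=-z\int\partial_{11}y_2\,f\,dx$. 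Equating the two yields $W(x_1,x_2+z)-W(x_1,x_2)=-z\,\partial_{11}y_2(x_1)$, hence the affine-in-$x_2$ structure. The point is that working with increments along carefully selected one-dimensional segments never requires bounding a jump amplitude --- only that the jump set is thin enough to be missed by most segments, which is exactly the information that is available.
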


\begin{rem}
Simlarly, one could identify the limit of $\frac1h \mathds{1}_{\Omega_1 \setminus \sigma_h} (\partial_1 y_h(x_1,x_2) - A_h e_1)$ as a function of the form $-x_2 \, A'(x_1) e_2 + T(x_1)$, where $A_h$ are the skew-symmetric functions constructed in Proposition \ref{prop: better compactness} and $A$ is the corresponding limit constructed in the proof of Theorem \ref{thm: compactness}.
As $A$ is skew-symmetric it holds $A'(x_1) e_2 \cdot e_2 = 0$. 
Hence, it is sufficient to determine the first component in this setting.
In the derivation of rod theories, this first entry only carries the information on bending but not on torsion, see \cite{GiGl21}.
\end{rem}

\begin{proof}
Recall that $w_h: \Omega_h \to \R^2$ is defined by $w_h(x_1,x_2) = y_h(x_1, x_2 /h)$ and $\sigma_h = \{(x_1,x_2) \in \Omega_1: (x_1,hx_2) \in \omega_h \} \subseteq \Omega_1$ is the analogue of $\omega_h \subseteq \Omega_h$ from Proposition \ref{prop: better compactness}.
Moreover, let $A_h \in \SBV^2(\Omega_h;\Skew(2))$ be the fields from Proposition \ref{prop: better compactness}.
By point \ref{item: bound nable wh Ah} of Proposition \ref{prop: better compactness}, we have  
\begin{equation}
\int_{\Omega_h \setminus \omega_h} \frac1{h^2}|\nabla w_h(x) - A_h(x_1)|^2  \, dx \leq Ch. 
\end{equation}
In particular, we have that $\int_{\Omega_h \setminus \omega_h} \frac1{h^2}|\partial_1 (w_h)_1(x)|^2  \, dx \leq Ch$.\footnote{$(w_h)_1$ denotes the first component of the vector $w_h\in \R^2$.}
A change of variables implies immediately that $W_h$ is bounded in $L^2(\Omega_1;\R^{2})$, so that every subsequence has a weakly convergent subsequence. 

Let us fix such a limit $W \in L^2(\Omega_1;\R^2)$ and a corresponding (not relabeled) subsequence.

Before we can prove the specific form of the limit $W$, let us recall from the proof of Theorem \ref{thm: compactness} that we have by construction for the sequences $A_h$ from Proposition \ref{prop: better compactness} and the sequences $\bar{A}_h$ from the proof of Theorem \ref{thm: compactness} that $A_h - \bar{A}_h \stackrel{*}{\rightharpoonup} A$ in $\SBV^2((0,L);\Skew(2))$ and that it holds $A = (\partial_1 y,c)$, where $c = \begin{pmatrix} -\partial_1 y_2 \\ 0\end{pmatrix}$ since $A$ is skew-symmetric. 
In particular, it holds that $(A_h^{1,2})' \rightharpoonup  -\partial_1 \partial_1 y_2$ in $L^2((0,L))$ since $\bar{A}_h$ is piecewise constant.
We note that for any other limit $\tilde{y}$, sequences of functions $\bar{A}_h$ and $\bar{b}_h$, and sets $\sigma_h$ for which the conclusions of Theorem \ref{thm: compactness} hold it can be shown that it still holds $\partial_1 \tilde{y} = \partial_1 y + d$, where $d \in \SBV((0,L);\R^2)$ is piecewise constant. 
In particular, $\partial_1 \partial_1 \tilde{y} = \partial_1 \partial_1 y$.
This shows that we can restrict ourselves to the specific $y \in \mathcal{A}$ that we construct in the proof of Theorem \ref{thm: compactness}.

To show the form of $W$, we consider for fixed $z\in (0,1/2)$ the second difference $V_h:\Omega \cap (\Omega-s_h e_1) \times (-1/2,1/2 - z) \to \R^2$, depending on $s_h>0$, defined by
\begin{align}
V_h(x) :=   \frac{\mathds{1}_h(x)\mathds{1}_{\Omega_1 \setminus \sigma_h}(x)}{h s_h} \bigg( &(y_h)_1(x + z e_2 + s_h e_1) - (y_h)_1(x + z e_2) \\& - (y_h)_1(x + s_h e_1) + (y_h)_1(x) \bigg).
\end{align}

Here, $\mathds{1}_h(x)= 1$ whenever $y_h$, $A_h$, and $A$ are absolutely continuous along the boundary of the rectangle spanned by the four points $x$, $x+ze_2$, $x+ze_2+s_h e_1$, $x+ s_h e_1$ and the boundary of the rectangle does not intersect $\partial \sigma_h$, and $0$ otherwise. 

As the jump sets of $A$ and $A_h$ a purely horizontal, it holds
\begin{align}\label{eq: char convergence}
\int_{\Omega_1} 1 -\mathds{1}_h(x) \,dx \leq &2|s_h| (\Hm^1(J_{y_h}) + \Hm^1(J_A) + \Hm^1(\partial^{*} \sigma_h \cap \Omega_1) + \#J_{A_h}) \\ &+ 2 z \left(\int_{J_{y_h}} |\nu \cdot e_2|\,d\Hm^1 + \int_{\partial^{*} \sigma_h} |\nu \cdot e_2|\,d\Hm^1 \right),
\end{align}
Here, $\nu$ denotes the measure theoretic normal to $J_{y_h}$ and $\partial^* \sigma_h$, respectively.  
By Theorem \ref{thm: compactness} \ref{item: bounds baromegah} and the bounds on the energy the right hand side tends to $0$ as $h\to0$ as long as $s_h \to 0$.

Now let $f\in C_c^\infty(\Omega \times (-1/2,1/2-z))$ be a test function. Then we have for $s_h \to 0$ with $\psi_h(x,t) = \frac1h \partial_1 (y_h)_1(x+ze_2 + ts_he_1) - \frac1h \partial_1 (y_h)_1(x+ts_he_1)$
\begin{align*}
&\int_{\Omega_1} V_h(x) f(x)\,dx\\
 = & \int_{\Omega_1} \left( \int_0^1 \psi_h(x,t) \,dt \right) \mathds{1}_h(x)\mathds{1}_{\Omega_1 \setminus \sigma_h}(x)f(x)\,dx \\
 = & \int_{\Omega_1} \left( \int_0^1 W_h(x+ze_2 + ts_he_1) - W_h(x+ts_he_1) \,dt \right) \mathds{1}_h(x) f(x)\,dx \\
\stackrel{h \to 0}{\longrightarrow} & \int_{\Omega_1} (W(x+ze_2)- W(x)) f(x)\,dx.
\end{align*}

Conversely, we have for $s_h = h^{\frac12}$ with $\varphi_h(x,t) = \frac{1}{h s_h} \partial_2 (y_h)_1(x+tze_2 + s_h e_1) - \frac{1}{h s_h} \partial_2 (y_h)_1(x+tze_2)$
\begin{align*}
&\lim_{h\to 0}\int_{\Omega_1} V_h(x) f(x)\,dx\\
 = & z \lim_{h\to 0}\int_{\Omega_1} \left( \int_0^1 \varphi_h(x,t) \,dt \right) \mathds{1}_h(x)\mathds{1}_{\Omega_1 \setminus \sigma_h}(x)f(x)\,dx \\
 = & z \lim_{h\to 0} \int_{\Omega_1} \frac1{s_h}\left(  A^{1,2}_h(x_1+s_h e_1)  - A^{1,2}_h(x_1)  \right) \mathds{1}_h(x) \mathds{1}_{\Omega_1 \setminus E_h}(x)f(x)\,dx \\
 = & z \lim_{h\to 0} \int_{\Omega_1} \left( \int_0^1  (A^{1,2}_h)'(x_1+ t s_h e_1)   \,dt \right) \mathds{1}_h(x) \mathds{1}_{\Omega_1 \setminus E_h}(x)f(x)\,dx \\
= & z \int_{\Omega_1} (A^{1,2})'(x) \, f(x)\,dx,
\end{align*}
For the second equality, we used the independence of $A_h, A$ from $x_2$ and the bound $\|(\frac1h \partial_2 y_h - A_h e_2)\mathds{1}_{\Omega_1 \setminus \sigma_h}\|_{L^2(\Omega_1;\R^2)} \leq Ch$, c.f.~\ref{item: bound nable wh Ah} in Proposition \ref{prop: better compactness}.

It follows that $W(x+ze_2)  - W(x)  = z \, (A^{1,2})'(x_1)$ for almost every $x\in \Omega_1$. 
Now, recall from above that
\begin{align}
(A^{1,2})'(x) = \partial_1 (-  \partial_1 y_2(x_1) ) = -  \partial_1 \partial_1 y_2(x_1).
\end{align}  
Consequently we obtain $W(x_1,x_2+z) - W(x_1,x_2) = - z \, ( \partial_1 \partial_1 y_2(x_1) )$ for almost every $x_1 \in (0,L)$. We define $T(x_1) := \int_{-1/2}^{1/2} W(x_1,z)\,dz$.
It follows for almost every $x_1 \in (0,L)$
\begin{align}
W(x_1,x_2) - T(x_1) & = \int_{-1/2}^{1/2} W(x_1,x_2) - W(x_1,z) \, dz \\
& = \int_{-1/2}^{1/2} -(x_2-z) \, (\partial_1 \partial_1 y_2(x_1) ) \, dz \\
&=  -x_2 \,  (\partial_1 \partial_1 y_2(x_1)).
\end{align}
\end{proof}

Now we can prove the lower bound in Theorem \ref{thm: Gamma limit}.
\begin{proof}[Proof of Theorem \ref{thm: Gamma limit} (i)]
By standard arguments we may always assume that it holds $\liminf_{h\to 0} E_h(y_h) = \lim_{h \to 0} E_h(y_h)$ and  $\sup_h E_h(y_h) < \infty$.

Then we notice that by the assumed convergence it holds that, c.f. Theorem \ref{thm: compactness} \ref{item: inequality jumpsets},
\begin{equation} \label{eq: lowerbound jumps}
\Hm^1( J_y \cup J_{\partial_1 y}) \leq \liminf_{h\to 0} \int_{J_{y_h}} |(\nu_1,\frac1h \nu_2)| \, d \Hm^1.
\end{equation}

Next, we show that
\[
\liminf_{h \to 0} \frac12 \int_{\Omega_1} \mathbb{C} \nabla_h y_h : \nabla_h y_h \, dx \geq \frac1{24} \int_{\Omega_1} a |\partial_1 \partial_1 y_2|^2 \, dx,
\]
where $a>0$ is defined in \eqref{eq: bending constant}.
First, we apply Proposition \ref{prop: better compactness} to the function $w_h(x_1,x_2) = y_h(x_1,x_2/h)$ to obtain sets $\omega_h \subseteq \Omega_h$ and $A_h \in \SBV((0,L);\Skew(2))$ for which the properties \ref{item: bounds omegah}, \ref{item: bounds Ah bh} and \ref{item: bound nable wh Ah}  of Proposition \ref{prop: better compactness} hold.
Again we denote by $\sigma_h$ the analogue of $\omega_h \subseteq \Omega_h$ in $\Omega_1$.
Then (up to a subsequence) it holds by Proposition \ref{prop: identification limit} that the sequence $W_h(x_1,x_2) = \frac1h \1_{\Omega_1 \setminus \sigma_h} \, ( \partial_1 (y_h)_1(x_1,x_2) )$ 
converges weakly in $L^2(\Omega_1)$ to a function $W \in L^2(\Omega_1)$ of the form
\begin{equation}
W(x_1,x_2) = -x_2 \, (\partial_1 \partial_1 y_2(x_1) ) + T(x_1),
\end{equation}
where $T \in L^2((0,L))$.
It follows by the definition of the bending constant $a$, c.f. \eqref{eq: bending constant}, that
\begin{align} \label{eq: lsc bulk}
\liminf_{h\to 0} \frac1{h^2} \frac12 \int_{\Omega_1} \mathbb{C} \nabla_h y_h : \nabla_h y_h \, dx \geq&\liminf_{h\to 0} \frac12 \int_{\Omega_1} a \,|W_h(x)|^2 \, dx \\ 
\geq & \frac12 \int_{\Omega_1} a \, |W(x)|^2 \, dx 
\end{align} 
Using the specific form of $W$ we compute
\begin{align}
\int_{\Omega_1} a\, |W |^2 \,dx = &\int_{\Omega_1} a \, |-x_2 \partial_1 \partial_1 y(x_1) + T(x_1)|^2 dx \\
\geq &\int_0^L \int_{-1/2}^{1/2} \left(a \, x_2^2 \left(\partial_1 \partial_1 y_2(x_1) \right)^2 - 2a x_2  \,(\partial_1 \partial_1 y_2(x_1)) \, T(x_1) \right)\, dx_2 dx_1 \\
=& \frac1{12} \int_0^L a \, |\partial_1 \partial_1 y_2(x_1) |^2 \, dx_1. \label{eq: lower bound elastic}
\end{align}
Combining \eqref{eq: lowerbound jumps} and \eqref{eq: lower bound elastic} yields
\begin{equation}
\liminf_{h \to 0} E_h(y_h) \geq \frac1{24}\int_0^L a  |\partial_1 \partial_1 y_2(x_1) |^2 \, dx_1 + \beta \Hm^1(J_y \cup J_{\partial_1} y).
\end{equation}

\end{proof}

\section{The upper bound}\label{sec: upper}

In this section we prove the existence of a recovery sequence for the energy $E_0$, i.e. we show Theorem \ref{thm: Gamma limit} (ii).

\begin{proof}[Proof of Theorem \ref{thm: Gamma limit} (ii)]
We may assume that $y \in \mathcal{A}$ (otherwise there is nothing to show).
Hence, $y \in \SBV^2(\Omega_1; \R^2)$ does not depend on $x_2$ , $\partial_1 y_2 \in \SBV^2(\Omega_1)$ and $\partial_1 y_1 = 0$.

In particular, we remark that $\partial_1 \partial_1 y_2 \in L^2(\Omega_1)$ does not depend on $x_2$.
Therefore we can find for every $\eta >0$ a function $g_{\eta} \in H^1(\Omega_1)$ which does not depend on $x_2$ such that $\| -\partial_1 \partial_1 y_2 - g_{\eta} \|_{L^2(\Omega_1)} \leq \eta$.
By the embedding $H^1((0,L)) \hookrightarrow L^{\infty}((0,L))$ in one dimension it follows that $\| g_{\eta} \|_{L^{\infty}(\Omega_1)} \leq C(\eta)$.
In addition, we note that since $\partial_1 y_2 \in \SBV^2(\Omega_1)$ does not depend on $x_2$ we obtain that $\partial_1 y_2 \in L^{\infty}(\Omega_1)$. Since $\partial_2 y_2 = 0$, we obtain $\nabla y_2 \in L^{\infty}(\Omega_1;\R^2)$.

Moreover, we recall from the definition of the bending coefficient $a$, c.f.~\eqref{eq: bending constant}, that there exist $b,c \in \R$ such that
\begin{equation}
a = \mathbb{C} \begin{pmatrix}
1 && b \\ 0 && c
\end{pmatrix} : \begin{pmatrix}
1 && b \\ 0 && c
\end{pmatrix}. 
\end{equation}
 
Now we define $y_h: \Omega_1 \to \R^2$ by
\begin{equation}
y_h(x_1,x_2) = y(x_1,0) - x_2 h \nabla y_2(x_1,0) + \frac12 x_2^2 h^2 g_{\eta}(x_1,0) \begin{pmatrix} b \\ c \end{pmatrix}.
\end{equation} 
One sees immediately that $y_h \to y$ in $L^{\infty}(\Omega_1;\R^2)$. \\

Moreover, the jumps of $y_h$ only occur in $x_1$-direction and $J_{y_h} \subseteq J_{y} \cup J_{\partial_1 y}$. In addition, $\nabla_h y_h = (\partial_1 y_h,\frac1h \partial_2 y_h)$ is given by 
\begin{align}
&\begin{pmatrix} - hx_2 \, \partial_1 \partial_1 y_2 + \frac12 x_2^2 h^2 \, b \,\partial_1 g_{\eta} & -\partial_1 y_2 + x_2 h \, b g_{\eta}\\
\partial_1 y_2  + \frac12 x_2^2 h^2 \, c \, \partial_1 g_{\eta} & x_2 h \,c g_{\eta} 
\end{pmatrix} \\ 
&= \underbrace{\begin{pmatrix} 0 & -\partial_1 y_2 \\ \partial_1 y_2 & 0 \end{pmatrix}}_{\text{skew-symmetric}} 
+ \begin{pmatrix}  - hx_2 \, \partial_1 \partial_1 y_2 + \frac12 x_2^2 h^2 \, b \, \partial_1 g_{\eta} &  x_2 h \, b g_{\eta} \\
  \frac12 x_2^2 h^2 \, c \,\partial_1 g_{\eta} & x_2 h \, c g_{\eta}  \end{pmatrix}.
\end{align}
Plugging into the elastic energy and using the $x_2$-independence of the occuring functions we find
\begin{align}
&\frac12 \int_{\Omega_1} \mathbb{C} (\partial_1 y_h,\frac1h \partial_2 y_h) : (\partial_1 y_h,\frac1h \partial_2 y_h) \, dx \\
=& \frac12 \int_{\Omega_1} h^2 x_2^2 \mathbb{C} \begin{pmatrix}  -\partial_1 \partial_1 y_2 &  b g_{\eta}\\
  0 &  b g_{\eta} 
  \end{pmatrix} : \begin{pmatrix}  -\partial_1 \partial_1 y_2 &  b g_{\eta}\\
  0 &  b g_{\eta} 
  \end{pmatrix} \\ &\quad +  x_2^3h^3 \, \mathbb{C} 
  \begin{pmatrix}  - \,\partial_1 \partial_1 y_2 &   b g_{\eta} \\
 0 &  c  g_{\eta} 
 \end{pmatrix} : \begin{pmatrix}   b (\partial_1 g_{\eta}) &  0\\
  c(\partial_1 g_{\eta})& 0  \end{pmatrix} \\
  &\quad + \frac14 x_2^4 h^4 (\partial_1 g_{\eta})^2 \, \mathbb{C} \begin{pmatrix}   b &  0\\
  c& 0  \end{pmatrix} :\begin{pmatrix}   b &  0\\
   c& 0  \end{pmatrix} \, dx  \\
  = & \frac12 \int_{\Omega_1} h^2 x_2^2 \mathbb{C} \begin{pmatrix}  -\partial_1 \partial_1 y_2 &   -(\partial_1 \partial_1 y_2) \, b \\
  0 &   -(\partial_1 \partial_1 y_2) \, c 
  \end{pmatrix} : \begin{pmatrix}  -\partial_1 \partial_1 y_2 &   -(\partial_1 \partial_1 y_2) \, b \\
  0 &   -(\partial_1 \partial_1 y_2) \, c 
  \end{pmatrix} \\  
  &\quad - h^2 x_2^2 \mathbb{C} \begin{pmatrix}  0 &   \partial_1 \partial_1 y_2 + g_{\eta} \, b \\
  0 &   \partial_1 \partial_1 y_2 + g_{\eta} \, c 
  \end{pmatrix} : \begin{pmatrix}  - 2\partial_1 \partial_1 y_2 &   -(\partial_1 \partial_1 y_2 - g_{\eta}) \, b \\
  0 &   -(\partial_1 \partial_1 y_2 - g_{\eta}) \, c 
  \end{pmatrix} \\
  &\quad + \frac14 x_2^4 h^4 (\partial_1 g_{\eta})^2 \, \mathbb{C} \begin{pmatrix}   b &  0\\
  c& 0  \end{pmatrix} :\begin{pmatrix}   b &  0\\
   c& 0  \end{pmatrix} \, dx \\ 
   \leq & \frac12 \int_{\Omega_1} h^2 x_2^2 a \, |\partial_1 \partial_1 y_2|^2 \, dx + C h^2 (\| \partial_1 \partial_1 y_2 \|_{L^2(\Omega_1)} + \eta) \| \partial_1 \partial_1 y_2 + g_{\eta} \|_{L^2(\Omega_1)} + C h^4 C(\eta) \\
   \leq & \frac12 \int_{\Omega_1} h^2 x_2^2 a \, |\partial_1 \partial_1 y_2|^2 \, dx + C \eta h^2 (\| \partial_1 \partial_1 y_2 \|_{L^2(\Omega_1)}+\eta) + C h^4 C(\eta).
\end{align} 
As $J_{y_h} \subseteq J_{y} \cup J_{\partial_1 y}$ is purely vertical, it follows that 
\begin{align}
&\limsup_{h\to 0} E_h(y_h)  \\
= &\limsup_{h\to 0} h^{-2}\int_{\Omega_1} \frac12 \mathbb{C}(\nabla_h y_h): (\nabla_h y_h) \, dx + \beta \int_{J_{y_h}} |(\nu_1,\nu_2/h)| \, d\Hm^1 \\
\leq &\frac12 \int_{\Omega_1} x_2^2 a \, |\partial_1 \partial_1 y_2|^2 \, dx + C\eta + \beta \Hm^1(J_{y} \cup J_{\partial_1 y}) \\
= &\frac1{24} \int_{\Omega_1} a \,| \partial_1\partial_1 y_2|^2 \, dx + \beta \Hm^1(J_{y} \cup J_{\partial_1 y}) + C\eta.
\end{align}
A diagonal argument in $\eta$ and $h$ finishes the proof.
\end{proof}

\section{Discussion}\label{sec: discussion}

In this section we want to briefly comment on the presented results and discuss future directions.
First, we acknowledge that the presented $\Gamma$-convergence result is very similar to the $n$-dimensional results already obtained in \cite{BaHe16} and \cite{AlTa20}.
However, we note that the compactness result presented in this paper and also the topology used for the $\Gamma$-convergence result is more general.
In \cite{BaHe16} uniform $L^{\infty}$-bounds are assumed a priori, in \cite{AlTa20} the authors invoke a compactness result in $\GSBD$ due to Chambolle and Crismale, \cite{ChCr19}.
The latter result essentially yields for a sequence $(y_h)_h \subseteq \GSBD(\Omega_1)$ such that $\sup_h \int_{\Omega_1} |e(w_h)|^2 \, dx < \infty$ and $\sup_h \mathcal{H}^{d-1}(J_{y_h}) < \infty$ that there is a (not relabeled) subsequence for which the set $A = \{ x \in \Omega_1: y_h(x) \to \infty\}$ has finite perimeter and outside $A$ one has $y_h \to y$ pointwise a.e.~for some $y \in \GSBD(\Omega_1)$. Moreover, $\mathcal{H}^{d-1}(\partial^* A \cup J_y) \leq \liminf_h \mathcal{H}^{d-1}(J_{y_h})$.
The compactness statement in this work also characterizes the limiting behavior of the sequence $y_h$ on the set $A$. Namely, we identify suitable rigid motion which we locally subtract from $y_h$ such that the remaining sequence is essentially compact in $L^2(\Omega_1;\R^2)$. In addition, these rigid motions do not create additional jump in the limit, c.f.~Theorem \ref{thm: compactness}.
Whether the used techniques can be generalized to higher dimensions to obtain an improved version of the result in \cite{ChCr19}, needs to be studied.

Moreover, we note that the presented analysis is closely related to the techniques presented in \cite{Sc17}, where the author derives a beam theory from a rotationally invariant model.
A key role in the analysis plays a geometric rigidity estimate for functions $y \in \SBV$, \cite{FriedrichSchmidt_KornIn2d}, which subdivides the domain into different regions in which the function $y$ is close to a rigid motion, see \cite[Theorem 3.5]{Sc17}. The estimate is essentially sharp in the sense that the perimeter of the identified regions is up to a small error controlled by the size of the jump set of $y$.

The presented techniques can be extended to derive a three-dimensional rod theory for brittle materials in the linearized setting. 
Similarly to two-dimensional beams, three-dimensional rods can undergo stretching and bending. However, in addition one can observe torsion, i.e. the twisting of the rod around its axis, see, for example, \cite{MoMu04} and the references therein.
In contrast to bending or stretching, in order to capture torsion in the limit, one has to keep track of the limit of $\nabla w_h \approx A_h$. 
Note that a three-dimensional version of Proposition \ref{prop: bridge} can be proved using Lemma \ref{lemma: determinant 3d} instead of Lemma \ref{lemma: determinant 2d}.
A corresponding paper is in preparation, \cite{GiGl21}.



\appendix
\section{Rigidity for trusses}

\begin{defi}\label{defi: linear map}
Let $d, N\in \N\setminus \{0\}$. Given $N$ pairs $(p_i,q_i) \in \R^d \times \R^d$, $i=1,\ldots,N$, consider the linear map $F: \Skew(d) \times \R^d \to \R^N$ defined by
\begin{equation}
F(A, b) = \left( (Ap_i + b)\cdot (p_i - q_i) \right)_{i=1}^N.
\end{equation}
\end{defi}

We are interested in the special case $N = \dim(\Skew(d)) + d = \frac{d(d+1)}{2}$, where $F\in \R^{N\times N}$ is a square matrix. We then ask for which pairs $(p_i,q_i)$ the associated $F$ is invertible, and what is its determinant.

\begin{rem}\label{rem: rays}
Assume that $q_i = 0$ for all $i=1,\ldots,N$. Then $F(A, b) = F(0,b)$ for all $A\in \Skew(d)$, since $Ax \cdot x = 0$ for all $x\in \R^d$. In particular, $F$ is not invertible.
\end{rem}

\begin{lemma}\label{lemma: line determinant}
Let $d\in \N\setminus\{0\}$, $N = \frac{d(d+1)}{2}$, $p_i \neq q_i$. Then there exists a function $f_d : (G^d)^N \to \R$ such that
\begin{equation} \label{eq: formula f}
\det F = d_1 \ldots d_N f_d(L_1,\ldots,L_N), 
\end{equation}
where $d_i = |p_i - q_i|$, and
\begin{equation}L_i= \left[\left(p_i, \frac{p_i - q_i}{|p_i - q_i|}\right)\right]\in (\R^d \times S^{d-1}) / \sim
\end{equation}
is the oriented line obtained by extending the line segment $[p_i,q_i]$, which is an element of the oriented affine line Grassmannian $G^d := \R^d \times S^{d-1} / \sim$, with $(x,v) \sim (z,w)$ whenever $v = w$ and $x-z \| \ v$.

In addition, the function $f_d$ has the property $f_d\circ \sigma = \det\sigma f_d$ for all permutations $\sigma$. Replacing any one line $[(p_i,v_i)]$ with its opposite $[(p_i,-v_i)]$ flips the sign of $f_d$ and $f_d$ is invariant under a single rotation or translation applied to all lines $L_1, \dots, L_N$.
\end{lemma}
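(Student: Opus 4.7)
The plan is to compute $\det F$ row-by-row and exploit the skew-symmetry identity $Ax\cdot x = 0$ together with the conjugation action of rigid motions on $\Skew(d)\times\R^d$.

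First I would write $p_i - q_i = d_i v_i$ with $v_i \in S^{d-1}$ and $d_i = |p_i - q_i|$. The $i$-th component of $F(A,b)$ is $d_i((Ap_i + b)\cdot v_i)$, so pulling the scalar $d_i$ out of each row gives $\det F = d_1\cdots d_N\,\det \tilde F$, where $\tilde F(A,b) := ((Ap_i + b)\cdot v_i)_{i=1}^N$. To see that $\det\tilde F$ depends only on the oriented lines $L_i$, observe that replacing $p_i$ by $p_i + t v_i$ alters the $i$-th row by $t(Av_i)\cdot v_i = 0$, since $A$ is skew. Defining $f(L_1,\ldots,L_N) := \det\tilde F$ thus yields \eqref{eq: formula f}.

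The two elementary transformation properties are then immediate. Permuting the $L_i$ permutes the rows of $\tilde F$, hence $f\circ\sigma = \det\sigma\cdot f$; and replacing $v_i$ by $-v_i$ (i.e.\ flipping the orientation of $L_i$, which is compatible with the equivalence relation) flips the sign of the $i$-th row, hence of $f$.

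For rigid-motion invariance, let $\phi(x) = Rx+c$ with $R\in SO(d)$. The conjugation action of $\phi$ on infinitesimal rigid motions $x\mapsto Ax+b$ is the linear bijection
\[
\Psi_\phi(A,b) = (RAR^T,\; Rb - RAR^T c)
\]
of $\Skew(d)\times\R^d$. A direct computation shows $\tilde F'(A,b) = \tilde F(\Psi_\phi^{-1}(A,b))$, where $\tilde F'$ is built from $(Rp_i+c, Rq_i+c)$ and $Rv_i$. Viewing $\Psi_\phi$ as block-triangular in $\Skew(d)\oplus\R^d$, its determinant equals $\det R = 1$ times the determinant of the conjugation map $A\mapsto RAR^T$ on $\Skew(d)$. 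The latter defines a continuous homomorphism from the connected group $SO(d)$ into $\{\pm 1\}$, hence is constantly $+1$. Therefore $\det\tilde F' = \det\tilde F$, so $f$ is invariant under $SO(d)\ltimes\R^d$. The main obstacle I expect is the bookkeeping in this last step: writing $\Psi_\phi$ so that $\tilde F' = \tilde F\circ\Psi_\phi^{-1}$ holds on the nose, and pinning down that conjugation by $R\in SO(d)$ has determinant one on $\Skew(d)$. Everything else reduces to elementary linear algebra and the identity $Ax\cdot x=0$.
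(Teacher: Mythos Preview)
Your proposal is correct and follows essentially the same line as the paper: factor out the lengths $d_i$ by multilinearity of the determinant, and use $Av\cdot v=0$ to show the resulting $\det\tilde F$ depends only on the oriented lines; the permutation and orientation-flip properties are then immediate from row operations on the determinant, exactly as in the paper.

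Where you go beyond the paper is the rigid-motion invariance: the paper simply asserts that this and the other properties ``follow immediately from the properties of the determinant,'' while you spell out the conjugation action $\Psi_\phi(A,b)=(RAR^T,\,Rb-RAR^Tc)$ and argue $\det\Psi_\phi=1$. Your block-triangular computation is correct; for the step you flag as the main obstacle, the cleanest justification that $A\mapsto RAR^T$ has determinant $+1$ on $\Skew(d)$ is that this map is orthogonal for the Frobenius inner product (so its determinant lies in $\{\pm1\}$) and then connectedness of $SO(d)$ forces $+1$, or alternatively that $R\mapsto\det(\mathrm{conj}_R)$ is a continuous homomorphism from the compact connected group $SO(d)$ into $\R^\times$, whose image must therefore be $\{1\}$. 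Either way the argument closes, and your verification that $\tilde F'=\tilde F\circ\Psi_\phi^{-1}$ is a routine check using $(A(Rp_i+c)+b)\cdot Rv_i=(R^TAR\,p_i+R^T(Ac+b))\cdot v_i$.
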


\begin{proof}
Let $L_i = [(p_i,v_i)] \in G^d$. 
We define $f_d: (G^d)^N \to \R$ by
\[
f_d(L_1,\dots, L_N) = \det F,
\]
where $F: \Skew(d) \times \R^d \to \R^N$ is the linear mapping such that 
\[
F(A,b) = ((Ap_i + b) \cdot v_i)_{i=1}^N.
\]
This definition is independent from the representative.
Indeed, let $[(q_i,w_i)] = L_i$. 
Then $p_i-q_i$ is parallel to $v_i=w_i$.
Since $A$ is skew-symmetric, we find that 
\[
(Aq_i + b)\cdot w_i = (Ap_i + b) \cdot v_i + (A(q_i-p_i)) \cdot w_i = (Ap_i + b)\cdot v_i. 
\]
This shows that $f_d$ is well-defined. 
The formula \eqref{eq: formula f} and the properties of $f_d$ follow immediately from the properties of the determinant.

\end{proof}

\begin{rem}
By Remark \ref{rem: rays} and Lemma \ref{lemma: line determinant}, we have $\det F = 0$ if all extended lines intersect in the same point or at least two lines are the same. We now show that these are the only cases where this happens.
\end{rem}

\begin{lemma}\label{lemma: determinant 2d}
Let $L_1,L_2,L_3\in G^2$ be three oriented lines. Then
\begin{itemize}
\item [(i)] If $L_1,L_2, L_3$ are parallel, then $f_2(L_1,L_2,L_3) = 0$.

\item [(ii)] If $L_1$ intersects $L_2$ in $p\in \R^2$ with angle $\alpha \in (0,\pi/2)$ and $L_3$ in $q\in \R^2$ with angle $\beta \in (0,\pi/2)$, then 
\begin{equation}\label{eq: determinant 2d}
|f_2(L_1,L_2,L_3)| = |p-q| |\sin\alpha| |\sin\beta|.
\end{equation}
\end{itemize} 
\end{lemma}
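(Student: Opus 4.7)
The plan is to exploit the rigid-motion invariance and the sign-flipping behaviour of $f$ established in Lemma~\ref{lemma: line determinant}, thereby reducing everything to a single explicit $3\times 3$ determinant. In $d=2$, $\Skew(2)$ is spanned by $J = \begin{pmatrix} 0 & -1 \\ 1 & 0\end{pmatrix}$, so writing $A = aJ$ for $a\in\R$ we have $Ap = a\,p^\perp$, and for lines $L_i = [(p_i,v_i)]$ with $v_i\in S^1$ the map $F\colon\R\times\R^2 \to \R^3$ from Definition~\ref{defi: linear map} is represented in the basis $(a,b_1,b_2)$ by
\begin{equation*}
F = \begin{pmatrix} \det(p_1,v_1) & (v_1)_1 & (v_1)_2 \\ \det(p_2,v_2) & (v_2)_1 & (v_2)_2 \\ \det(p_3,v_3) & (v_3)_1 & (v_3)_2 \end{pmatrix},
\end{equation*}
using $p^\perp \cdot v = \det(p,v)$. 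This identity is the starting point.

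For part (i), if $L_1,L_2,L_3$ are parallel then the directions satisfy $v_i = \pm v$ for a common $v\in S^1$. Using that $[(p,v)]\sim[(p,-v)]$ flips the sign of $f$, we may assume $v_1 = v_2 = v_3 = v$, after which the last two columns of the matrix above are both constant. The three rows then lie in a two-dimensional affine subspace, so any two of the differences between rows are proportional, forcing $\det F = 0$.

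For part (ii), I will apply a rigid motion (which preserves $|f|$ by Lemma~\ref{lemma: line determinant}) to place the intersection $p$ at the origin and orient $L_1$ along the $x$-axis, so that we may take $p_1 = p_2 = 0$, $v_1 = e_1$, and, after a further sign flip of $v_2$ if needed, $v_2 = (\cos\alpha,\sin\alpha)$. Because $q\in L_1$, pick the representative $p_3 = q = (s,0)$ with $s = \pm|p-q|$, and write $v_3 = (\cos\beta,\sin\beta)$. Then $\det(p_1,v_1) = \det(p_2,v_2) = 0$ while $\det(p_3,v_3) = s\sin\beta$, so
\begin{equation*}
F = \begin{pmatrix} 0 & 1 & 0 \\ 0 & \cos\alpha & \sin\alpha \\ s\sin\beta & \cos\beta & \sin\beta \end{pmatrix},
\end{equation*}
and expansion along the first column gives $\det F = s\,\sin\alpha\,\sin\beta$. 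Taking absolute values yields $|f(L_1,L_2,L_3)| = |p-q|\,|\sin\alpha|\,|\sin\beta|$.

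The only subtlety — what I would call the main bookkeeping obstacle — is to track carefully that all of the above choices (sign of the $v_i$, choice of base point on each line, the rigid motion) affect $\det F$ only by a sign, so that $|f|$, which is the intrinsic invariant of the unordered triple of oriented lines, really is the right-hand side in \eqref{eq: determinant 2d}. Once this is secured, parts (i) and (ii) reduce to the one-line computations above.
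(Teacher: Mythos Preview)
Your proof is correct and follows essentially the same route as the paper: represent $F$ as a $3\times 3$ matrix with rows $(p_i\wedge v_i,\,(v_i)_1,\,(v_i)_2)$, observe linear dependence in the parallel case, and for (ii) use rigid-motion invariance to place $p$ at the origin and one line along the $x$-axis before computing the determinant explicitly. The only cosmetic difference is that the paper aligns $v_2$ with $e_1$ while you align $v_1$; also, a sign flip of $v_2$ alone cannot force $v_2=(\cos\alpha,\sin\alpha)$ exactly (only $v_2=(\cos\alpha,\pm\sin\alpha)$), but as you note in your final paragraph this only affects signs and disappears under the absolute value.
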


Note that this covers all cases. Note that if $L_1,L_2,L_3$ are pairwise transversal, then formula \eqref{eq: determinant 2d} is independent of the order of $L_1,L_2,L_3$. If two of the three lines are parallel, we have to fix the order so that $L_1$ is the transversal direction. In this case $\alpha = \beta$ in \eqref{eq: determinant 2d}, and this is indeed the case in the proof of Proposition \ref{prop: bridge}.

\begin{proof}
We work with variables $a,b_1,b_2$, where $A = a\begin{pmatrix}0&1\\-1&0\end{pmatrix}$ and $b = (b_1,b_2)^T$ which defines the standard basis of $\Skew(2) \times \R^2$. In these coordinates, if $L_i = [(p_i, v_i)]$, the mapping from Definition \ref{defi: linear map} is represented by a matrix $\tilde{F} \in \R^{3\times 3}$ whose rows are $(p_i\wedge v_i, (v_i)_1, (v_i)_2)\in \R^3$, where as usual $x \wedge y = x_1 y_2 - x_2 y_1$ for $x,y \in \R^2$ . 

In case (i), $v_1 = v_2 = v_3$, the rows of $\tilde{F}$ are linearly dependent, and $\det \tilde{F} = 0$.
Hence, $f(L_1,L_2,L_3) = 0$.

In case (ii), we apply a rigid motion, so that $v_2 = (1,0)^T$, $p_1 = p_2 = p = (0,0)^T$. Then $p_3 = q = \pm |p-q|v_1$. We calculate explicitly
\begin{align*}
|f_2(L_1,L_2,L_3)| &= \left| \det \begin{pmatrix}0&(v_1)_1&(v_1)_2\\0&1&0\\(p-q)\wedge v_3&(v_3)_1&(v_3)_2\end{pmatrix} \right| \\
&= |(p-q)\wedge v_3||v_1 \wedge v_2| = |p-q||\sin\beta| |\sin\alpha| .
\end{align*}
\end{proof}

We now turn to the three-dimensional case. The space $\Skew(3) \times \R^3$ has the canonical basis
\begin{equation}
(\alpha, \beta, \gamma, b_1,b_2,b_3) \mapsto \left(\begin{pmatrix} 0 & \alpha & \beta\\-\alpha & 0 & \gamma\\ -\beta & -\gamma & 0\end{pmatrix}, \begin{pmatrix} b_1\\b_2\\b_3 \end{pmatrix} \right).
\end{equation}

If $L_i = [(p_i, v_i)]$, the $i$th row of $F$ is then given by
\begin{equation}
(p_i \times v_i, v_i)\in \R^6,
\end{equation}
where $p_i \times v_i$ denotes the usual cross product in $\R^3$.

\begin{lemma}\label{lemma: determinant 3d}
Let $L_i = [(p_i,v_i)]\in G^3$ for $i=1,\ldots,6$, and assume that $v_1 = v_2 = v_3$. Define $\overline{v} := v - (v_1\otimes v_1) v \in (v_1)^\perp$ for $v\in \R^3$ and $\overline{L}_i = [(\overline{x}_i, \frac{\overline{v}_i}{|\overline{v}_i|})]\in G^3$ for $i=4,5,6$ which we interpret as an element of $G^2$. Then
\begin{equation}\label{eq: determinant 3d}
|f_3(L_1,\ldots,L_6)| = |(\overline{x}_2-\overline{x}_1)\wedge (\overline{x}_3 - \overline{x}_1)| |\overline{v}_4||\overline{v}_5||\overline{v}_6| |f_2(\overline{L}_4, \overline{L}_5, \overline{L}_6)|.
\end{equation}
\end{lemma}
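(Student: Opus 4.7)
The plan is to compute $\det F$ directly by exploiting that rows $1,2,3$ share the same last three entries $v_1$, which lets the $6 \times 6$ determinant split into a $2 \times 2$ block in the orthogonal plane and a residual $4 \times 4$ block that is essentially the $2$-dimensional rigidity matrix associated with the projected lines $\overline{L}_4,\overline{L}_5,\overline{L}_6$.

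First, I would choose an orthonormal basis $(e_1,e_2,e_3)$ of $\R^3$ with $e_3 = v_1$. Writing $F$ as the $6\times 6$ matrix whose $i$-th row is $(p_i\times v_i, v_i)\in \R^6$, the assumption $v_1=v_2=v_3$ means that rows $1,2,3$ all have last three entries $(0,0,1)$. I perform the row operations ``row $2 \mapsto$ row $2-$ row $1$'' and ``row $3 \mapsto$ row $3-$ row $1$''. These do not change the determinant and produce two rows of the form $((p_i-p_1)\times v_1,\,0)\in\R^6$ for $i=2,3$. Since $(p_i - p_1)\times e_3$ has vanishing third component, rows $2$ and $3$ of the new matrix have zeros in columns $3,4,5,6$. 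Writing $p_i - p_1 = a_i e_1 + b_i e_2 + c_i e_3$, the first two entries are $(b_i, -a_i)$. Observing that by the definition of the projection $\overline{\,\cdot\,}$ onto $v_1^\perp$ one has $\overline{x}_i - \overline{x}_1 = a_i e_1 + b_i e_2$, the $2\times 2$ minor of rows $2,3$ in columns $1,2$ equals $a_2 b_3 - a_3 b_2 = (\overline{x}_2 - \overline{x}_1)\wedge(\overline{x}_3 - \overline{x}_1)$.

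Next, since rows $2,3$ only have nonzero entries in columns $1,2$, a Laplace expansion along these two rows yields
\begin{equation}
|\det F| = |(\overline{x}_2-\overline{x}_1)\wedge(\overline{x}_3-\overline{x}_1)|\cdot|\det M'|,
\end{equation}
where $M'$ is the $4\times 4$ submatrix consisting of rows $1,4,5,6$ and columns $3,4,5,6$. Row $1$ of $M'$ is $((p_1\times v_1)_3, (v_1)_1, (v_1)_2, (v_1)_3) = (0,0,0,1)$, because $v_1 = e_3$ makes both $(p_1\times e_3)_3$ and the first two components of $v_1$ vanish. Expanding the determinant of $M'$ along this row reduces the problem to the absolute value of
\begin{equation}
\det\begin{pmatrix} (p_4\times v_4)_3 & (v_4)_1 & (v_4)_2\\ (p_5\times v_5)_3 & (v_5)_1 & (v_5)_2\\ (p_6\times v_6)_3 & (v_6)_1 & (v_6)_2\end{pmatrix}.
\end{equation}

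Finally, I would identify this $3\times 3$ determinant with the $2$-dimensional rigidity matrix from Lemma~\ref{lemma: line determinant}. Using the identities $(p_i\times v_i)_3 = \overline{p}_i \wedge \overline{v}_i$ and $((v_i)_1,(v_i)_2)=\overline{v}_i$ in the $(e_1,e_2)$-basis, the $i$-th row equals $|\overline{v}_i|$ times the $i$-th row one would associate in $\R^2$ to the oriented line $\overline{L}_i = [(\overline{p}_i,\overline{v}_i/|\overline{v}_i|)]$. Pulling out these scalars gives $|\overline{v}_4||\overline{v}_5||\overline{v}_6|\cdot |f(\overline{L}_4,\overline{L}_5,\overline{L}_6)|$, and combining with the $2\times 2$ factor above yields \eqref{eq: determinant 3d}.

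The only genuinely delicate step is the Laplace expansion bookkeeping; the rest is linear algebra. The conceptual content is exactly what one expects geometrically: three parallel rods leave only a $2$-dimensional residual degree of freedom (rotation about an axis parallel to $v_1$ together with in-plane translations orthogonal to $v_1$), and the remaining three rods must rigidify this $2$-dimensional problem, which is governed by the projected lines.
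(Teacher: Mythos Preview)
Your proof is correct and follows essentially the same route as the paper's: both reduce the $6\times 6$ determinant to a product of a $2\times 2$ block (coming from the three parallel lines) and the $3\times 3$ two-dimensional rigidity matrix for the projected lines $\overline{L}_4,\overline{L}_5,\overline{L}_6$ via a Laplace/cofactor expansion. The only cosmetic differences are that the paper normalizes $v_1=e_1$ and $p_1=0$ by a rigid motion (so the first column is $(0,0,0,1,0,0)^T$ and one expands along it), whereas you take $v_1=e_3$ and replace the normalization $p_1=0$ by the row operations subtracting row~$1$ from rows~$2,3$; this directly produces the difference $(\overline{x}_i-\overline{x}_1)$ in the $2\times2$ factor rather than recovering it a posteriori from $\overline{x}_1=0$.
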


\begin{proof}
We apply a rigid motion so that $v_1 = (1,0,0)^T$, $p_1 = (0,0,0)^T$. Then we may interpret the vectors $\bar{v}_i$ and $\bar{x}_i$ to be elements of $\R^2$. In this way we can compute
\begin{equation}
\begin{aligned}
&|f_3(L_1,\ldots,L_6)| \\
= &\begin{vmatrix}
(0,0,0)^T & (p_2 \times v_1)^T  & (p_3 \times v_1)^T  & (p_4 \times v_4)^T  & (p_5 \times v_5)^T  & (p_6 \times v_6)^T  \\
v_1^T & v_1^T & v_1^T & v_4^T & v_5^T & v_6^T
\end{vmatrix}\\
=&
\begin{vmatrix}
(p_2 \times v_1)^T  & (p_3 \times v_1)^T  & (p_4 \times v_4)^T  & (p_5 \times v_5)^T  & (p_6 \times v_6)^T  \\
(0,0)^T & (0,0)^T & \overline{v}_4^T & \overline{v}_5^T & \overline{v}_6^T
\end{vmatrix}\\
=&
|\overline{x}_2 \wedge \overline{x}_3|\begin{vmatrix}
(\overline{x}_4 \wedge \overline{v}_4)^T  & (\overline{x}_5 \wedge \overline{v}_5)^T  & (\overline{x}_6 \wedge \overline{v}_6)^T  \\
\overline{v}_4^T & \overline{v}_5^T & \overline{v}_6^T
\end{vmatrix}\\
=&
|\overline{x}_2 \wedge \overline{x}_3|
|\overline{v}_4 | |\overline{v}_5||\overline{v}_6| |f_2(\overline{L}_4,\overline{L}_5, \overline{L}_6)|.
\end{aligned}
\end{equation}
\end{proof}

We see that we can guarantee that $f_3(L_1,\ldots,L_6)$ is bounded away from $0$ if the three parallel lines are chosen with a large cross-area, the other three lines are transversal enough and the intersection points of the projections of $L_4,L_5, L_6$ do not all coincide.

\bibliographystyle{abbrv}
\bibliography{2d_to_1d_revised}

\begin{thebibliography}{10}

\bibitem{AlBeMiPe21}
S.~Almi, S.~Belz, S.~Micheletti, and S.~Perotto.
\newblock A dimension-reduction model for brittle fractures on thin shells with
  mesh adaptivity.
\newblock {\em Mathematical Models and Methods in Applied Sciences},
  31(01):37--81, 2021.

\bibitem{AlTa20}
S.~Almi and E.~Tasso.
\newblock Brittle fracture in linearly elastic plates.
\newblock arXiv: 2006.09150, 2020.

\bibitem{AmbrosioFuscoPallara}
L.~Ambrosio, N.~Fusco, and D.~Pallara.
\newblock {\em Functions of bounded variation and free discontinuity problems},
  volume 254.
\newblock Clarendon Press Oxford, 2000.

\bibitem{Antman2005}
S.~S. Antman.
\newblock {\em Nonlinear problems of elasticity}, volume 107 of {\em Applied
  Mathematical Sciences}.
\newblock Springer, New York, second edition, 2005.

\bibitem{Haf06}
O.~{Anza Hafsa} and J.-P. Mandallena.
\newblock The nonlinear membrane energy: Variational derivation under the
  constraint “$\operatorname{det} \nabla u \neq 0$”.
\newblock {\em Journal de Mathématiques Pures et Appliquées}, 86(2):100--115,
  2006.

\bibitem{Haf08}
O.~{Anza Hafsa} and J.-P. Mandallena.
\newblock The nonlinear membrane energy: variational derivation under the
  constraint “$\operatorname{det} \nabla u > 0$”.
\newblock {\em Bulletin des Sciences Mathématiques}, 132(4):272--291, 2008.

\bibitem{Ba}
J.-F. Babadjian.
\newblock Quasistatic evolution of a brittle thin film.
\newblock {\em Calc. Var. Partial Differential Equations}, 26(1):69--118, 2006.

\bibitem{BaHe16}
J.-F. Babadjian and D.~Henao.
\newblock Reduced models for linearly elastic thin films allowing for fracture,
  debonding or delamination.
\newblock {\em Interfaces Free Bound.}, 18(4):545--578, 2016.

\bibitem{BaBaBoHeMa14}
A.~L. Baldelli, J.-F. Babadjian, B.~Bourdin, D.~Henao, and C.~Maurini.
\newblock A variational model for fracture and debonding of thin films under
  in-plane loadings.
\newblock {\em Journal of the Mechanics and Physics of Solids}, 70:320--348,
  2014.

\bibitem{BoFoLeMa}
G.~Bouchitt{\'e}, I.~Fonseca, G.~Leoni, and L.~Mascarenhas.
\newblock A global method for relaxation in {$W^{1,p}$} and in {$SBV^p$}.
\newblock {\em Arch. Ration. Mech. Anal.}, 165(3):187--242, 2002.

\bibitem{FoBr01}
A.~Braides and I.~Fonseca.
\newblock Brittle thin films.
\newblock {\em Appl. Anal. Optim.}, 44(3):299--323, 2001.

\bibitem{CaChSc20}
F.~Cagnetti, A.~Chambolle, and L.~Scardia.
\newblock Korn and {P}oincar{\'e}-{K}orn inequalities for functions with small
  jump set.
\newblock {\em Math. Ann.}, 2021.

\bibitem{BlakeZisserman3}
M.~Carriero, A.~Leaci, and F.~Tomarelli.
\newblock A second order model in image segmentation: {B}lake {\&} {Z}isserman
  functional.
\newblock In R.~Serapioni and F.~Tomarelli, editors, {\em Variational Methods
  for Discontinuous Structures}, pages 57--72, Basel, 1996. Birkh{\"a}user
  Basel.

\bibitem{BlakeZisserman2}
M.~Carriero, A.~Leaci, and F.~Tomarelli.
\newblock Strong minimizers of {B}lake {\&} {Z}isserman functional.
\newblock {\em Ann. Scuola Norm. Sup. Pisa. Cl. Sci.}, 25:257--285, 01 1997.

\bibitem{BlakeZisserman4}
M.~Carriero, A.~Leaci, and F.~Tomarelli.
\newblock Euler equations for {B}lake and {Z}isserman functional.
\newblock {\em Calc. Var. Partial Differential Equations}, 32(1):81--110, 2008.

\bibitem{CoChFr16}
A.~Chambolle, S.~Conti, and G.~Francfort.
\newblock Korn-{P}oincar\'{e} inequalities for functions with a small jump set.
\newblock {\em Indiana Univ. Math. J.}, 65(4):1373--1399, 2016.

\bibitem{ChCr19}
A.~Chambolle and V.~Crismale.
\newblock Compactness and lower semicontinuity in {$GSBD$}.
\newblock {\em J. Eur. Math. Soc. (JEMS)}, 23(3):701--719, 2021.

\bibitem{Ciarlet}
P.~G. Ciarlet.
\newblock {\em Mathematical elasticity. {V}ol. {II}}, volume~27 of {\em Studies
  in Mathematics and its Applications}.
\newblock North-Holland Publishing Co., Amsterdam, 1997.
\newblock Theory of plates.

\bibitem{dal2013generalised}
G.~Dal~Maso.
\newblock Generalised functions of bounded deformation.
\newblock {\em J. Eur. Math. Soc. (JEMS)}, 15(5):1943--1997, 2013.

\bibitem{DMPe02}
G.~Dal~Maso, M.~Negri, and D.~Percivale.
\newblock Linearized elasticity as-limit of finite elasticity.
\newblock {\em Set-Valued Anal}, 10(2-3):165--183, 2002.

\bibitem{FrMa98}
G.~A. Francfort and J.-J. Marigo.
\newblock Revisiting brittle fracture as an energy minimization problem.
\newblock {\em J. Mech. Phys. Solids}, 46(8):1319--1342, 1998.

\bibitem{FriedrichSchmidt_KornIn2d}
M.~Friedrich and B.~Schmidt.
\newblock A quantitative geometric rigidity result in sbd.
\newblock arXiv: 1503.06821, 2015.

\bibitem{friesecke2003derivation}
G.~Friesecke, R.~D. James, M.~G. Mora, and S.~M{\"u}ller.
\newblock Derivation of nonlinear bending theory for shells from
  three-dimensional nonlinear elasticity by gamma-convergence.
\newblock {\em C. R. Math. Acad. Sci. Paris}, 336(8):697--702, 2003.

\bibitem{FJM02}
G.~Friesecke, R.~D. James, and S.~M{\"u}ller.
\newblock A theorem on geometric rigidity and the derivation of nonlinear plate
  theory from three-dimensional elasticity.
\newblock {\em Comm. Pure Appl. Math.}, 55(11):1461--1506, 2002.

\bibitem{FJM06}
G.~Friesecke, R.~D. James, and S.~M\"{u}ller.
\newblock A hierarchy of plate models derived from nonlinear elasticity by
  gamma-convergence.
\newblock {\em Arch. Ration. Mech. Anal.}, 180(2):183--236, 2006.

\bibitem{GiGl21}
J.~Ginster and P.~Gladbach.
\newblock A rod theory for brittle materials: linearized setting.
\newblock in preparation.

\bibitem{Gr21}
A.~A. Griffith.
\newblock The phenomena of rupture and flow in solids.
\newblock {\em Philosophical Transactions of the Royal Society of London.
  Series A, Containing Papers of a Mathematical or Physical Character},
  221:163--198, 1921.

\bibitem{gurtin1973linear}
M.~E. Gurtin.
\newblock The linear theory of elasticity.
\newblock In {\em Linear theories of elasticity and thermoelasticity}, pages
  1--295. Springer, 1973.

\bibitem{korn1909einige}
A.~Korn.
\newblock {\"U}ber einige {U}ngleichungen, welche in der {T}heorie der
  elastischen und elektrischen {S}chwingungen eine {R}olle spielen.
\newblock {\em Bull. Int. Cracovie Akademie Umiejet, Classe des Sci. Math.
  Nat}, pages 705--724, 1909.

\bibitem{LDR95}
H.~Le~Dret and A.~Raoult.
\newblock The nonlinear membrane model as variational limit of nonlinear
  three-dimensional elasticity.
\newblock {\em J. Math. Pures Appl. (9)}, 74(6):549--578, 1995.

\bibitem{LDR96}
H.~Le~Dret and A.~Raoult.
\newblock The membrane shell model in nonlinear elasticity: a variational
  asymptotic derivation.
\newblock {\em J. Nonlinear Sci.}, 6(1):59--84, 1996.

\bibitem{MoMu04}
M.~G. Mora and S.~M\"{u}ller.
\newblock A nonlinear model for inextensible rods as a low energy
  {$\Gamma$}-limit of three-dimensional nonlinear elasticity.
\newblock {\em Ann. Inst. H. Poincar\'{e} Anal. Non Lin\'{e}aire},
  21(3):271--293, 2004.

\bibitem{nash1954c1}
J.~Nash.
\newblock {$C^1$} isometric imbeddings.
\newblock {\em Ann. of Math.}, pages 383--396, 1954.

\bibitem{Sc17}
B.~Schmidt.
\newblock A {G}riffith-{E}uler-{B}ernoulli theory for thin brittle beams
  derived from nonlinear models in variational fracture mechanics.
\newblock {\em Math. Models Methods Appl. Sci.}, 27(9):1685--1726, 2017.

\bibitem{BlakeZisserman87}
B.~A. Zisserman.
\newblock {\em Visual reconstruction}.
\newblock MIT-Press, 1987.

\end{thebibliography}

\end{document}